\documentclass[11pt, oneside]{amsart}
\usepackage[margin=1.2in,footskip=0.25in]{geometry}
\usepackage{amsmath, amssymb, latexsym}
\usepackage[T1]{fontenc}

\usepackage{xcolor}
\usepackage{graphicx}
\usepackage{amsfonts}
\usepackage{amsthm}
\usepackage{hyperref}
\newtheorem{theorem}{Theorem}
\newtheorem{definition}[theorem]{Definition}
\newtheorem{corollary}[theorem]{Corollary}
\newtheorem{proposition}[theorem]{Proposition}
\newtheorem{lemma}[theorem]{Lemma}

\newtheorem{example}[theorem]{Example}
\usepackage{enumitem}
\usepackage{comment}
\usepackage{multirow}

\def\Fq{\mathbb{F}_q}
\def\Fp{\mathbb{F}_p}

\usepackage{listings}

\lstset{
    language=Python,
    basicstyle=\ttfamily\footnotesize,
    keywordstyle=\color{blue},
    commentstyle=\color{green},
    stringstyle=\color{red},
    numbers=left,
    numberstyle=\tiny,
    stepnumber=1,
    breaklines=true,
    frame=single,
    tabsize=4,
    showstringspaces=false
}

\title{On $\mathbb{F}_q$-primitive points on hypersurfaces}


\keywords{finite fields, character sums, primitive elements, Dwork-regular polynomials, Fermat hypersurfaces, Fermat primes}
\subjclass[2020]{12E20 (primary) and 11T24 (secondary)}

\begin{document}

\author[J. A. Oliveira]{Jos\'e Alves Oliveira}
\address{Departamento de Matem\'{a}tica,
	Universidade Federal de Lavras,
	UFLA,
	Lavras, MG 37200-900 ,
	Brazil.}
\curraddr{}
\email{jose{\string_}oliveira@ufla.br}
\thanks{}

\author[Marcelo O. Veloso]{Marcelo Veloso} 
\address{Departamento de F\'isica e Matem\'{a}tica,
	Universidade Federal de São João del-Rei,
	UFSJ, Alto Paraopeba,
	Ouro Branco, MG 36415-000 ,
	Brazil.}
\curraddr{}
\email{veloso@ufsj.edu.br}
\thanks{}

	\maketitle
	\begin{abstract}
		\noindent In this paper, we estimate the number of $\mathbb{F}_q$-primitive points on the affine hypersurface defined by the equation $f(x_1,\ldots,x_s)=0$, where $f\in\mathbb{F}_q[x_1,\dots,x_s]$ is an appropriate polynomial. In particular, we provide existence results for the case when $f$ is Dwork-regular and when $f$ is of Fermat type. Additionally, we present a proof for a recently posed conjecture. Finally, in the case where $q$ is a Fermat prime, we provide an explicit formula for the number of $\mathbb{F}_q$-primitive points on hyperplanes.
	\end{abstract}
	
	

\section{Introduction}
Let $\Fq$ be a finite field with $q$ elements, where $q$ is a power of the characteristic $p$. The multiplicative group $\Fq^*$ is cyclic, and its generators are called \textit{primitive} elements of $\Fq$. These elements are important in the theory of finite fields, primarily due to their applications. This interest has motivated research into problems involving primitive elements. The most notable example is the well-known Primitive Normal Basis Theorem, which was first proven by Lenstra and Schoof in~\cite{lenstra} using a computer, and later by Cohen and Huczynska in~\cite{cohen2003} without the assistance of a computer. In a slightly different vein, the description of finite fields that contain a pair $(\alpha, f(\alpha))$ of primitive elements has emerged as a recurring theme in this area~\cite{Booker,Cohen85,Cohen21,Wang}. In a notable paper \cite{cohen15}, the authors provide a positive answer to a conjecture posed by Cohen and Mullen, which guarantees the existence of a pair $(\alpha,f(\alpha))$ of primitive elements in the case where $f$ is a general linear polynomial and $q>61$. Overall, results in this domain typically rely on character sums to establish cases for $q > q_0$, complemented by computational methods for the finitely many scenarios where $q < q_0$, where $q_0$ is a fixed constant.
 
 As noted by Cohen, Kapetanakis and Reis~\cite{cohenreis}, the number of pairs $(\alpha, f(\alpha))$ of primitive elements corresponds to the number of $\Fq$-rational points on the affine curve 
 $$\mathcal C: y=f(x),$$
 whose coordinates are primitive. In this context, they introduced the concept of $\Fq$-primitive point on a curve, which refers to $\Fq$-rational points where all coordinates are a primitive element of $\Fq$. They also defined the notion of $(r,d)$-free elements in cyclic groups, a convenient framework for representing elements expressible in specific forms. With this machinery in hand, the authors extended existing results in the literature by proving existence theorems for affine curves described by equations of the form $y^n=f(x)$. 
 Extending the definition of $\mathbb{F}_q$-primitive points to affine hypersurfaces in $s$ variables is a natural progression of this problem. Indeed, the case $s=3$ was recently studied by Takshak, Kapetanakis, and Sharma~\cite{Takshak2025}, where the authors estimate the number of $\mathbb{F}_q$-primitive points on affine surfaces of the type $y^n = f(x,y)$, with $g$ satisfying certain conditions.

In this paper, we address the number of $\mathbb{F}_q$-rational points on affine hypersurfaces whose coordinates are primitive, which we refer to as $\mathbb{F}_q$-primitive points, thereby extending the aforementioned concept. Along the paper, we let $P_q(f)=P_q(f(x_1,\dots,x_s))$ denote the number of $\mathbb{F}_q$-primitive points on the affine hypersurface
\begin{equation}\label{hypersurface}
    \mathcal H:f(x_1,\dots,x_s)=0.
\end{equation}
Since there exist $\varphi(q-1)$ primitive elements in $\Fq$, where $\varphi$ denotes the Euler totient function, it is heuristically expected that a proportion of $\tfrac{\varphi(q-1)}{q-1}$ of the non-zero values of $x_i$ in ~\eqref{hypersurface} will be primitive. Consequently, it is expected that
$$P_q(f)\approx \left(\tfrac{\varphi(q-1)}{q-1}\right)^s |\mathcal H(\Fq)^*|,$$
where $\mathcal H(\Fq)^*$ represents the set of $\Fq$-rational points on $\mathcal H$ whose coordinates are non-zero. These heuristics have been demonstrated to hold true in instances where $\mathcal H$ is a superelliptic curve given by $y^n=f(x)$ (see Theorem 16 in~\cite{cohenreis} and Lemma~\ref{lemad}) and where $\mathcal H$ is a superelliptic surface given by $z^n=f(x,y)$ (see the proof of Theorem 3.2 in~\cite{Takshak2025} and Lemma~\ref{lemad}). A natural generalization of these specific cases arises when $\mathcal H$ is a affine superelliptic hypersurface of the form 
$$ y^n=f(x_1,\ldots,x_s).$$
As noted in Remark 3.4 of \cite{Takshak2025}, their approach can also be applied in this more general setting, assuming that $f\in\Fq(x_1,\ldots,x_s)$ is a \textit{primarily non-exceptional} rational function, a minor restriction on the shape of $f$ that avoids some exceptional or awkward cases. Indeed, by applying the techniques used in the proof of Theorem 3.2 in~\cite{Takshak2025} together with Lemma~\ref{lemad}, one obtains the following result.
\begin{theorem}\label{cohenapproach}
    Let $f\in\Fq(x_1,\ldots,x_s)$ be a primarily non-exceptional rational function of degree sum $d$. Then
    $$\left|P_q\big(y^n-f(x_1,\ldots,x_s)\big)- \tfrac{\varphi(q-1)^{s+1}}{q}\right|\le ndW\big(\tfrac{q-1}{n}\big) W\big(q-1\big)\varphi(q-1)^{s+1}(q-1)^{-2} q^{\frac{1}{2}},$$
where $W(m)$ denotes the number of squarefree divisors of $m$ and $\varphi(m)$ denotes the Euler's totient function.
\end{theorem}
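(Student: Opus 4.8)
The plan is to realize $P_q(y^n-f)$ through the standard sieve for primitive (equivalently, $(q-1)$-free) elements and then reduce the resulting character sums to the situation already handled by Lemma~\ref{lemad}. Recall that the indicator that $\alpha\in\Fq^*$ is primitive can be written as
$$\rho(\alpha)=\frac{\varphi(q-1)}{q-1}\sum_{e\mid q-1}\frac{\mu(e)}{\varphi(e)}\sum_{\ordem(\chi)=e}\chi(\alpha),$$
the inner sum running over the multiplicative characters of $\Fq$ of order exactly $e$. Applying $\rho$ to each of the $s+1$ coordinates and summing over the nonzero points of $\mathcal H$, one expands $P_q(y^n-f)$ as a M\"obius-weighted combination of incomplete character sums $\sum_{(x,y)\in\mathcal H(\Fq)^*}\chi_0(y)\prod_{i=1}^s\chi_i(x_i)$ indexed by tuples $(\chi_0,\chi_1,\dots,\chi_s)$. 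The term in which every $\chi_i$ is principal contributes $\bigl(\tfrac{\varphi(q-1)}{q-1}\bigr)^{s+1}|\mathcal H(\Fq)^*|$, which I would evaluate to $\tfrac{\varphi(q-1)^{s+1}}{q}$ after estimating $|\mathcal H(\Fq)^*|$, the resulting discrepancy being absorbed into the error.

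Next I would follow the strategy of Theorem 3.2 in~\cite{Takshak2025} and separate the sieve on $y$ from the sieve on $(x_1,\dots,x_s)$. For fixed $(x_1,\dots,x_s)$ with $f(x)\neq0$, the primitive solutions $y$ of $y^n=f(x)$ are counted by a one-variable sieve whose $n$-th power structure contributes the factor $W\bigl(\tfrac{q-1}{n}\bigr)$. The outer sum over primitive $(x_1,\dots,x_s)$ with $f(x)$ constrained to the relevant coset of $n$-th powers is then of exactly the form to which Lemma~\ref{lemad} applies, collapsing the $x$-sieve and expressing the count as a main term plus an error proportional to $W(q-1)$ times the size of the surviving character sums over $\mathcal H$.

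The analytic heart, and the step I expect to be the main obstacle, is the uniform bound on the character sums $\sum_{x}\chi(f(x))\prod_i\chi_i(x_i)$ attached to the nonprincipal tuples. Because $f$ has degree sum $d$ and is primarily non-exceptional, the variety cut out by $y^n=f(x)$ is nondegenerate, so the Weil--Deligne bounds yield cancellation of order $nd\,q^{1/2}$; it is precisely this hypothesis that rules out the degenerate configurations in which the sum fails to cancel. The dominant contribution to the error comes from tuples in which all but two coordinates carry principal characters, the remaining $s-1$ free variables supplying a factor $(q-1)^{s-1}$ that, together with the prefactor $\bigl(\tfrac{\varphi(q-1)}{q-1}\bigr)^{s+1}$, produces the normalization $\varphi(q-1)^{s+1}(q-1)^{-2}$. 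Assembling the sieve factors $W\bigl(\tfrac{q-1}{n}\bigr)$ and $W(q-1)$, the Weil bound $nd\,q^{1/2}$, and this normalization yields the stated inequality. The care needed in tracking the exact combination of these factors, and in verifying that the non-exceptional hypothesis is indeed what licenses the Weil bound rather than a trivial estimate, is the portion most likely to demand attention.
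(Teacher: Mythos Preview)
Your outline has the right ingredients, but two points diverge from the argument the paper has in mind, and the second is a genuine gap for the \emph{stated} bound. First, you have the role of Lemma~\ref{lemad} reversed. That lemma counts, for a fixed $z\in\Fq^*$ of order $(q-1)/n$, the number of primitive $y$ with $y^n=z$; its job is to dispose of the $y$-coordinate, converting ``$y$ primitive with $y^n=f(x)$'' into ``$f(x)$ has order $(q-1)/n$'' together with the scalar $\varphi(q-1)/\varphi((q-1)/n)$. It has nothing to do with collapsing the $x$-sieve.

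Second, expanding the primitive indicator on \emph{all} $s+1$ coordinates does not lead to a bound with a single factor $W(q-1)$. Once every $x_i$ is sieved by characters, the collection of character tuples you must sum over carries $W(q-1)^s$; your remark that tuples with only two nonprincipal characters give the largest \emph{individual} terms does not help, because after taking absolute values every tuple contributes with the same sign, and it is their multiplicity that produces the unwanted $W(q-1)^s$. The route the paper points to (the method of Theorem~3.2 in~\cite{Takshak2025}) sieves only \emph{one} of the $x_i$, say $x_s$, while $x_1,\dots,x_{s-1}$ are summed directly over primitive values, contributing the factor $\varphi(q-1)^{s-1}$ outright. For each fixed primitive $(x_1,\dots,x_{s-1})$ the remaining sum is a one-variable sum $\sum_{x_s}\lambda(f(x))\chi_s(x_s)$, and it is the classical Weil bound on this one-dimensional sum---not a higher-dimensional Deligne estimate on all of $\mathcal H$---that supplies the factor $d\,q^{1/2}$; the primarily non-exceptional hypothesis is exactly what keeps this one-variable sum nondegenerate. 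This is the meaning of the paper's comment that ``Weil's bound was applied to only a single variable'', and it explains both the single $W(q-1)$ and the normalization $\varphi(q-1)^{s+1}(q-1)^{-2}=\varphi(q-1)^{s-1}\cdot\bigl(\varphi(q-1)/(q-1)\bigr)^2$.
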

While this result covers a broad class of superelliptic hypersurfaces, the presence of the term $\varphi(q-1)^{s+1}$ weakens the bound. This is mainly due to the fact that Weil's bound was applied to only a single variable. This discussion naturally leads to two important questions: can one obtain a tighter bound for a specific family of hypersurfaces? Can a similar result be established in cases where $\mathcal H$ is not a superelliptic hypersurface? The primary aim of this paper is to estimate the number of $\Fq$-primitive points on different classes of hypersurface families by employing a novel approach. Among other matters, we compute $P_q(f)$ in terms of character sums and then provide an estimative for $P_q(f)$ in the case where $f$ is a Dwork-regular polynomial or a Fermat-type polynomial. This estimate allow us to prove that $P_q(f)>0$ for sufficiently large $q$. As a consequence of our results, we prove a conjecture recently posed in \cite{Takshak2025}. Furthermore, we provide an explicit formula for $P_q(f)$ in the case where $f(x_1,\ldots,x_s)=a_1 x_1+\ldots a_s x_s-b$ and $q$ is a Fermat prime.

\section{Main results}

To present our main results, we first introduce a class of polynomials. A polynomial $f(x_1,\dots,x_s)\in\Fq(x_1,\dots,x_s)$ is called a \textit{Deligne
 polynomial} if its degree $d$ is prime to $p$, its highest degree term, $f_d$, is a nonzero homogeneous form of degree $d$ in $s$ variables and, for $s \geq 2$, the vanishing of $f_d$ defines a smooth hypersurface in the projective space $\mathbb{P}^{s-1}$. A Deligne polynomial $f = f(x_1,...,x_s)$ is \textit{Dwork-regular} if, for every proper subset $I\subset \{1,2,...,s\}$, the polynomial obtained by setting $x_i \mapsto 0$ for $i\in I$ is a also a Deligne polynomial of the same degree $d$ in the remaining variables. For a Dwork-regular polynomial $f$, we are able to estimate $P_q(f)$ and establish the following lower bound.

\begin{theorem}\label{teo1} If $f\in\Fq[x_1,\ldots,x_s]$ of prime-to-$p$ degree $d$ is Dwork-regular, then
    $$\left|P_q(f)- \tfrac{\varphi(q-1)^s}{q}\right| \le
(d q^{\frac{1}{2}}+1)^s W(q-1)^s,$$
where $W(n)$ denotes the number of squarefree divisors of $n$ and $\varphi(n)$ denotes the Euler's totient function.
\end{theorem}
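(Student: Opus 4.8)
The plan is to express $P_q(f)$ via the standard characteristic-function machinery for primitivity and then apply deep point-count estimates for Dwork-regular polynomials.

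The plan is to detect primitivity in each coordinate through the standard multiplicative-character indicator and then reduce the problem to estimating a family of mixed (additive times multiplicative) character sums over the torus $(\Fq^*)^s$. Recall that for $\alpha\in\Fq^*$ the quantity $\tfrac{\varphi(q-1)}{q-1}\sum_{e\mid q-1}\tfrac{\mu(e)}{\varphi(e)}\sum_{\ordem(\chi)=e}\chi(\alpha)$ equals $1$ if $\alpha$ is primitive and $0$ otherwise. Inserting one such factor for each variable and summing over the points of $\mathcal H$ with nonzero coordinates, I would obtain
$$P_q(f)=\Big(\tfrac{\varphi(q-1)}{q-1}\Big)^s\sum_{e_1\mid q-1}\cdots\sum_{e_s\mid q-1}\prod_{i=1}^s\frac{\mu(e_i)}{\varphi(e_i)}\sum_{\ordem(\chi_i)=e_i}N(\chi_1,\dots,\chi_s),$$
where, writing $\vec\chi=(\chi_1,\dots,\chi_s)$, the quantity $N(\vec\chi)=\sum_{x\in(\Fq^*)^s,\,f(x)=0}\prod_{i=1}^s\chi_i(x_i)$ is the $\vec\chi$-twisted count of torus points on $\mathcal H$.

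Next I would write the vanishing condition $f(x)=0$ through the canonical additive character $\psi$, using the fact that $\tfrac1q\sum_{t\in\Fq}\psi(tf(x))$ is $1$ exactly when $f(x)=0$. This turns each $N(\vec\chi)$ into $\tfrac1q\sum_{t\in\Fq}T(t,\vec\chi)$ with $T(t,\vec\chi)=\sum_{x\in(\Fq^*)^s}\psi(tf(x))\prod_i\chi_i(x_i)$. The term $t=0$ gives $\prod_i\sum_{x_i\in\Fq^*}\chi_i(x_i)$, which equals $(q-1)^s$ when every $\chi_i$ is trivial and vanishes otherwise; this single surviving term produces precisely the main term $\varphi(q-1)^s/q$. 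Everything else—namely all contributions with $t\neq0$, for every character tuple including the trivial one—is collected into the error.

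The heart of the argument, and the step I expect to be the main obstacle, is a uniform bound of the shape $|T(t,\vec\chi)|\le (dq^{1/2}+1)^s$ valid for every $t\neq0$ and every $\vec\chi$. Since scaling by $t\neq0$ preserves Dwork-regularity, this is an exponential-sum estimate for Dwork-regular polynomials twisted by multiplicative characters on the torus. In the untwisted case it follows cleanly from Deligne's bound via inclusion--exclusion on the coordinate subspaces: writing $V_J=\{x:x_i=0\text{ for }i\notin J\}$, one has $\sum_{x\in(\Fq^*)^s}\psi(tf(x))=\sum_{J\subseteq\{1,\dots,s\}}(-1)^{s-|J|}\sum_{x\in V_J}\psi(tf(x))$, and Dwork-regularity guarantees that each restriction $f_J:=f|_{x_i=0,\,i\notin J}$ is again a Deligne polynomial of degree $d$ in $|J|$ variables, so $\big|\sum_{y\in\Fq^{|J|}}\psi(tf_J(y))\big|\le (d-1)^{|J|}q^{|J|/2}$. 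Summing the $2^s$ pieces yields $\big((d-1)q^{1/2}+1\big)^s\le (dq^{1/2}+1)^s$. The genuine difficulty is the mixed case, where the presence of nontrivial $\chi_i$ blocks a direct reduction to full-affine-space sums; here I would invoke the torus analogue of Deligne's purity for Dwork-regular polynomials (whose relevant Newton polytope is the $d$-fold dilate of the standard simplex, of normalized volume $d^s$) to secure the same bound $(dq^{1/2}+1)^s$.

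Finally I would assemble the estimate. Bounding $\tfrac1q\sum_{t\neq0}|T(t,\vec\chi)|\le (dq^{1/2}+1)^s$ for each tuple, the error is at most
$$\Big(\tfrac{\varphi(q-1)}{q-1}\Big)^s(dq^{1/2}+1)^s\sum_{e_1\mid q-1}\cdots\sum_{e_s\mid q-1}\prod_{i=1}^s\frac{|\mu(e_i)|}{\varphi(e_i)}\sum_{\ordem(\chi_i)=e_i}1.$$
Since there are exactly $\varphi(e_i)$ characters of order $e_i$, the innermost sums cancel the factors $\varphi(e_i)$, leaving $\prod_i\sum_{e_i\mid q-1}|\mu(e_i)|=W(q-1)^s$; together with $\big(\tfrac{\varphi(q-1)}{q-1}\big)^s\le1$ this gives $\big|P_q(f)-\varphi(q-1)^s/q\big|\le (dq^{1/2}+1)^sW(q-1)^s$, as claimed.
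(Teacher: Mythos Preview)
Your overall architecture matches the paper's: both detect primitivity via the Carlitz/M\"obius character expansion, insert the additive character for the condition $f=0$, and then reduce everything to bounding sums of the shape $\sum_{x}\psi(\alpha f(x))\prod_i\chi_i(x_i)$, with inclusion--exclusion over coordinate subspaces producing the factor $(dq^{1/2}+1)^s$. The genuine gap is in your handling of the twisted case. You assert that nontrivial $\chi_i$ ``block a direct reduction to full-affine-space sums'' and then fall back on a vague ``torus analogue of Deligne's purity''. This diagnosis is backwards: since $\chi_i(0)=0$ for nontrivial $\chi_i$, extending the $i$-th coordinate from $\Fq^*$ to $\Fq$ changes nothing, so inclusion--exclusion is only needed on the coordinates carrying the trivial character. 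After inclusion--exclusion every surviving piece is a sum over a full affine space $\Fq^{|J|}$ of $\psi(\alpha f_J(y))\prod_{i\in J}\chi_i(y_i)$, where $f_J$ is the restriction of $f$ to the coordinates in $J$; Dwork-regularity of $f$ guarantees that each $f_J$ is again Dwork-regular of degree $d$ in $|J|$ variables.

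What you are missing is the correct estimate for these pieces when some $\chi_i$ are nontrivial: Deligne's bound (the only concrete tool you cite) does not apply to mixed sums. The paper's proof rests precisely on this point, invoking Katz's mixed character sum estimate (Theorem~1.1 of \emph{Estimates for mixed character sums}), which gives
\[
\Big|\sum_{y\in\Fq^{|J|}}\psi(f_J(y))\prod_{i\in J}\chi_i(y_i)\Big|\le d^{|J|}q^{|J|/2}
\]
for any Dwork-regular $f_J$ of prime-to-$p$ degree $d$ and any tuple of multiplicative characters. Summing over $J\subseteq\{1,\dots,s\}$ then yields $|T(t,\vec\chi)|\le(dq^{1/2}+1)^s$ uniformly in $\vec\chi$, after which your final assembly is correct. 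Your proposed substitute---a Newton-polytope torus estimate with the $d$-simplex---would require nondegeneracy of $f$ with respect to every face of that polytope, which is neither the hypothesis you have nor implied by Dwork-regularity; as stated, that step is not justified.
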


By using Lemmas~\ref{phibound} and \ref{boundW} on the numbers $\varphi(q-1)$ and $W(n)$, Theorem 1 implies that
\[
\begin{aligned}
P_q(f)>   \frac{(q-1)^{s}}{q}\left(\frac{2ln(ln(q-1))}{2e^{\gamma}[ln(ln(q-1))]^2+5}\right)^s
 -(dq^{\frac{1}{2}}+1)^{s}q^{\frac{0.96s}{ln(ln(q))}},
\end{aligned}
\]
which means that, for every degree $d$ polynomial $f$ that satisfies the conditions, we have $P_q(f)>0$ if $q$ is sufficiently large. The conditions that $f$ is of degree $d$ (prime to $p$) and Dwork-regular are fundamental in the proof of the theorem, as it relies on a result concerning mixed character sums by Katz~\cite{katz}. We strongly believe that such a bound holds for a more general family of polynomials $f$, however, removing this condition appears to be challenging, as discussed throughout Katz's paper. It is worth mentioning that any polynomial of the form $\sum_{i=1}^s x_i^d+$\textit{(terms of lower degree)} is Dwork-regular, which means that this gives a large family of polynomials.

Note that the polynomial $f(x_1,\dots,x_s)= a_1x_1^{d}+a_2x_2^{d}+\cdots+a_sx_s^{d}-b$ is Dwork-regular, which means that Theorem~\ref{teo1} provides the existence of 
$\Fq$-primitive points on the Fermat hypersurface
$$a_1x_1^{d}+a_2x_2^{d}+\cdots+a_sx_s^{d}=b.$$
The number of $\Fq$-rational points on this type of Fermat hypersurface was studied by Wolfmann~\cite{wolfmann} and was recently extended to the case of distinct exponents $d_1, \dots, d_s$ in \cite{oliveira1,oliveira2}. In the proof of the following result, we employ Jacobi sums to derive an estimate for the number of $\Fq$-primitive points on Fermat hypersurfaces.

\begin{theorem}
\label{fermat-cota}
Let $f(x_1,\dots,x_s)= a_1x_1^{d_1}+a_2x_2^{d_2}+\cdots+a_sx_s^{d_s}-b\in\Fq[x_1,\ldots,x_s]$, where $s\geq 2$, and $d_i\mid (q-1)$, for $i=1,\ldots,s$. Then
$$\big|P_q(f)- \tfrac{\varphi(q-1)^s}{q}\big|\le\left(\tfrac{\varphi(q-1)}{q-1}\right)^s\tfrac{1}{\sqrt{q}}\left[1+\bigg(d_1W\big(\tfrac{q-1}{d_1}\big)-1\bigg)q^{\frac{1}{2}}\right]\dots\left[1+\bigg(d_sW\big(\tfrac{q-1}{d_s}\big)-1\bigg)q^{\frac{1}{2}}\right]+\delta_b,$$
where $\delta_b=(\varphi(q-1)/(q-1))^s$ if $b=0$ and $\delta_b=0$ if $b\neq 0$.
\end{theorem}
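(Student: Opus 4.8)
The plan is to detect the primitivity of each coordinate and the vanishing of $f$ by character sums, to exploit that $f$ is diagonal in order to factor the resulting expression over the $s$ variables, and finally to recognise the one-variable pieces as Gauss sums whose recombination is governed by a Jacobi sum of modulus $q^{(s-1)/2}$. Writing the characteristic function of the primitive elements of $\Fq^*$ in Vinogradov's form
\[
\mathbf{1}_{\mathrm{prim}}(\alpha)=\frac{\varphi(q-1)}{q-1}\sum_{\chi}\frac{\mu(\ordem\chi)}{\varphi(\ordem\chi)}\chi(\alpha),
\]
(the sum over all multiplicative characters $\chi$, the weight vanishing unless $\ordem\chi$ is squarefree) and detecting the equation through a fixed nontrivial additive character $\psi$ via $\frac1q\sum_{t\in\Fq}\psi(tc)$, which is $1$ when $c=0$ and $0$ otherwise, I substitute $c=f(x_1,\dots,x_s)$. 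Since $f$ is diagonal we have $\psi(tf(x))=\psi(-tb)\prod_i\psi(ta_ix_i^{d_i})$, so the sum over $(x_1,\dots,x_s)\in(\Fq^*)^s$ factors and yields
\[
P_q(f)=\frac1q\Bigl(\tfrac{\varphi(q-1)}{q-1}\Bigr)^s\sum_{t\in\Fq}\psi(-tb)\prod_{i=1}^s T_i(t),\qquad T_i(t)=\sum_{\chi}\frac{\mu(\ordem\chi)}{\varphi(\ordem\chi)}\sum_{x\in\Fq^*}\psi(ta_ix^{d_i})\chi(x).
\]
The block $t=0$ contributes $q-1$ to each $T_i$ (only the trivial $\chi$ survives), producing exactly the main term $\varphi(q-1)^s/q$.

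For $t\neq0$ the argument $ta_ix^{d_i}$ never vanishes, so I expand $\psi$ by the Gauss-sum inversion $\psi(y)=\frac1{q-1}\sum_\lambda G(\bar\lambda)\lambda(y)$, where $G(\lambda)=\sum_{x\in\Fq^*}\lambda(x)\psi(x)$. Orthogonality collapses the sum over $x$ (forcing $\chi=\overline{\lambda^{d_i}}$), and interchanging the roles of $\chi$ and $\lambda$ gives the reorganisation
\[
T_i(t)=\sum_{\lambda}\frac{\mu\bigl(\ordem(\lambda^{d_i})\bigr)}{\varphi\bigl(\ordem(\lambda^{d_i})\bigr)}\,G(\bar\lambda)\,\lambda(ta_i).
\]
Multiplying the $s$ factors and summing over $t\neq0$ turns the $t$-sum into $\sum_{t\neq0}\psi(-tb)\Lambda(t)$ with $\Lambda=\lambda_1\cdots\lambda_s$; for $b\neq0$ this equals $\bar\Lambda(-b)G(\Lambda)$ and thus has modulus at most $q^{1/2}$, which is precisely the extra saving of $q^{1/2}$ over the trivial estimate. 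Combining this with $|G(\bar\lambda)|=q^{1/2}$ for $\lambda$ nontrivial and $G(\varepsilon_0)=-1$ (here $\varepsilon_0$ is the trivial character), the error is bounded by $\tfrac1{\sqrt q}(\tfrac{\varphi(q-1)}{q-1})^s\prod_i B_i$, where $B_i=\sum_\lambda\frac{|\mu(\ordem(\lambda^{d_i}))|}{\varphi(\ordem(\lambda^{d_i}))}\,|G(\bar\lambda)|$.

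The decisive computation is the evaluation of $B_i$. The total weighted count $\sum_\lambda\frac{|\mu(\ordem(\lambda^{d_i}))|}{\varphi(\ordem(\lambda^{d_i}))}$ is found by grouping the characters according to $m:=\ordem(\lambda^{d_i})$, a divisor of $(q-1)/d_i$: since $\lambda\mapsto\lambda^{d_i}$ is $d_i$-to-one onto the characters of order dividing $(q-1)/d_i$, there are $d_i\varphi(m)$ characters with $\ordem(\lambda^{d_i})=m$, and as $|\mu(m)|/\varphi(m)$ cancels $\varphi(m)$ this count equals $\sum_{m\mid(q-1)/d_i}d_i|\mu(m)|=d_iW\bigl((q-1)/d_i\bigr)$. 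The trivial character (for which $|G(\bar\lambda)|=1$) carries weight $1$, while every nontrivial $\lambda$ carries $|G(\bar\lambda)|=q^{1/2}$, whence $B_i=1+\bigl(d_iW((q-1)/d_i)-1\bigr)q^{1/2}$, matching the bracketed factors in the statement. For $b=0$ the sum over $t$ forces $\Lambda=\varepsilon_0$ and replaces the saving $q^{1/2}$ by $q-1$; isolating the all-trivial tuple $\lambda_1=\dots=\lambda_s=\varepsilon_0$ produces the extra term $\delta_b=(\varphi(q-1)/(q-1))^s$, while on the remaining tuples the relation $\Lambda=\varepsilon_0$ removes one free character and keeps that contribution within the same $\tfrac1{\sqrt q}\prod_i B_i$ bound.

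The step I expect to be the main obstacle is the estimate in the second paragraph: organising the product of the $s$ Gauss sums $G(\bar\lambda_i)$ together with $G(\Lambda)$ so that the modulus is governed by a single Jacobi sum of size $q^{(s-1)/2}$ rather than a naive $q^{(s+1)/2}$, and then the degenerate case $b=0$, where the missing Gauss-sum saving must be compensated exactly by $\delta_b$ together with the gain coming from the relation $\Lambda=\varepsilon_0$.
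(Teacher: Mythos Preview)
For $b\ne 0$ your argument is correct and reaches the stated bound. It differs from the paper's route mainly in bookkeeping: the paper first substitutes $y_i=x_i^{d_i}$, detects that $y_i$ has order $(q-1)/d_i$ via Carlitz's indicator, uses inclusion--exclusion over the set $I$ of nonzero coordinates, and then bounds the resulting Jacobi sums $J_I$ directly by Proposition~\ref{item519}; you instead keep the variables $x_i$, detect $f=0$ additively, and Fourier-invert $\psi$ to land on Gauss sums. Your computation of $B_i$ via the $d_i$-to-one map $\lambda\mapsto\lambda^{d_i}$ is equivalent to the paper's use of Lemma~\ref{sum-mu-phi}. Both approaches factor cleanly over the coordinates because $f$ is diagonal, and they are essentially dual to one another.

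The step you flag at the end is a genuine obstruction, not a technicality. For $b=0$ the $t$-sum equals $(q-1)\,\mathbf{1}_{\Lambda=\varepsilon_0}$, so you lose a factor $\sqrt q$ relative to the case $b\ne 0$ and must recover it from the constraint $\Lambda=\varepsilon_0$. But ``removing one free character'' saves at most a factor comparable to $d_iW\!\bigl(\tfrac{q-1}{d_i}\bigr)$ in the weighted character count, which can be far smaller than $\sqrt q$. In fact the stated inequality is \emph{false} at $b=0$: take $q=17$, $s=2$, $d_1=d_2=1$, $a_1=a_2=1$. Every primitive $x$ has $-x$ primitive (since $-1$ is a square in $\mathbb F_{17}$), so $P_q(x_1+x_2)=\varphi(16)=8$; this also follows from Corollary~\ref{corexato}. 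Hence
\[
\Bigl|P_q(f)-\tfrac{\varphi(16)^2}{17}\Bigr|=\tfrac{72}{17}\approx 4.24,
\qquad\text{while the claimed bound is}\quad
\tfrac14\cdot\tfrac{1}{\sqrt{17}}\bigl(1+\sqrt{17}\bigr)^{2}+\tfrac14\approx 1.84.
\]
So no argument can close your gap at $b=0$; the statement itself requires a larger $\delta_b$. (The paper's own proof has the same oversight in a different guise: it invokes the bound $|J_I|\le q^{(|I|-1)/2}$ coming from $J_1$, but for $b=0$ with $\prod_{i\in I}\lambda_i$ trivial one actually has $|J_0|=(q-1)\,q^{(|I|-2)/2}$, which is larger by essentially $\sqrt q$.) Your proof stands for $b\ne 0$; for $b=0$ the theorem needs to be amended.
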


By applying a sieving version of Theorem~\ref{fermat-cota}, we prove a conjecture recently stated by  Takshaka, Kapetanakis and Sharma \cite{Takshak2025}.

\begin{theorem}\cite[Conjecture  $4.3$]{Takshak2025}\label{conj}
 Assume that $q$ is odd. There exists a $\Fq$-primitive point on the sphere $x^2+y^2+x^2=1$ if and only if   
$q\neq 3,5,9,13,25.$
\end{theorem}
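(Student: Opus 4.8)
The plan is to apply Theorem~\ref{fermat-cota} with $s=3$, $d_1=d_2=d_3=2$, $a_1=a_2=a_3=1$, and $b=1$, so that the hypersurface is exactly the sphere $x^2+y^2+z^2=1$. Since $q$ is odd we have $2\mid(q-1)$, so the hypotheses are met, and $\delta_b=0$ because $b=1\neq 0$. For the sufficiency direction (that $P_q(f)>0$ whenever $q\notin\{3,5,9,13,25\}$), I would first extract from Theorem~\ref{fermat-cota} a clean numerical lower bound on $P_q(f)$. With each factor of the form $1+(2W(\tfrac{q-1}{2})-1)q^{1/2}$, the bound reads
\[
P_q(f)\ge \tfrac{\varphi(q-1)^3}{q}-\left(\tfrac{\varphi(q-1)}{q-1}\right)^3\tfrac{1}{\sqrt q}\left[1+\bigl(2W(\tfrac{q-1}{2})-1\bigr)q^{1/2}\right]^3.
\]
The goal is to show the right-hand side is positive for all sufficiently large $q$, reducing the problem to a finite computation.

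The core analytic step is to make the inequality $P_q(f)>0$ explicit. I would divide through by $(\varphi(q-1)/(q-1))^3$ and rearrange to obtain a sufficient condition of the shape
\[
(q-1)^3 q^{-1/2}>\left[1+\bigl(2W(\tfrac{q-1}{2})-1\bigr)q^{1/2}\right]^3,
\]
which after taking cube roots becomes $(q-1)q^{-1/6}>1+(2W(\tfrac{q-1}{2})-1)q^{1/2}$, i.e.\ roughly $q^{5/6}\gtrsim 2W(\tfrac{q-1}{2})\,q^{1/2}$, so $q^{1/3}\gtrsim 2W(\tfrac{q-1}{2})$. Using Lemma~\ref{boundW} to bound $W(\tfrac{q-1}{2})\le W(q-1)\le q^{0.96/\ln\ln q}$ (and $\varphi(q-1)/(q-1)$ from Lemma~\ref{phibound}), the subexponential growth of $W$ is dominated by the power $q^{1/3}$, so the condition holds for all $q\ge q_0$ for some explicit threshold $q_0$.

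For the range $q<q_0$ not handled by the estimate, I would invoke the sieve version of Theorem~\ref{fermat-cota} mentioned in the text: rather than demanding each coordinate be fully primitive in one step, one sieves over the prime divisors of $q-1$, which sharpens the character-sum bound enough to clear many more values of $q$. The residual finitely many fields are then checked by direct computation, and this is where the exceptional list $q\in\{3,5,9,13,25\}$ emerges. I expect the main obstacle to be twofold: first, pinning down a threshold $q_0$ small enough that the sieve plus computation is feasible, since the naive bound above may leave a large gap; and second, the necessity direction, where for each exceptional $q$ one must verify by exhaustive search that no triple of primitive elements satisfies $x^2+y^2+z^2=1$—a finite but delicate check, especially distinguishing genuinely empty cases from near-misses. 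Establishing the sharp cutoff between the last excluded field $q=25$ and the smallest included field is the delicate endpoint of the argument.
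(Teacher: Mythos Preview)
Your plan is correct and mirrors the paper's proof almost step for step: apply Theorem~\ref{fermat-cota} with $s=3$, $d_i=2$, $b=1$ to get the inequality $(q-1)^3/\sqrt{q}>[1+(2W(\tfrac{q-1}{2})-1)\sqrt{q}]^3$, use Lemma~\ref{boundW} to obtain an explicit threshold (the paper gets $q>5.275\times 10^9$), then iterate the prime sieve of Theorem~\ref{sieveconsequence} to push this down to a few hundred thousand, and finish both directions by a SageMath search that produces exactly the exceptions $\{3,5,9,13,25\}$. The only superfluous remark is your appeal to Lemma~\ref{phibound}: since $\varphi(q-1)$ appears to the same power on both sides of the inequality, it cancels and no lower bound on $\varphi(q-1)/(q-1)$ is needed.
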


In the case where $q$ is a Fermat prime, we can compute explicitly the number of $\Fq$-primitive points on affine hyperplanes. In order to state the result, we let $\chi_2$ be the quadratic character of $\mathbb{F}_q^*$, $\nu(0)=q-1$ and $\nu(b)=-1$ for $b\in\Fq^*$.
\begin{theorem}\label{numexato}
\label{plinear}
Let $f(x_1,\ldots,x_s)=a_1 x_1+\cdots+a_s x_s-b\in\Fq[x_1,\ldots,x_s]$, where $q>3$ is a Fermat prime. Then
	\[
P_q(f(\mathbf x))=\frac{\varphi(q-1)^s}{q}+\frac{1}{q}\left(\frac{(-1)^s\tau_0 \prod_{i=1}^s\big(\sqrt{q}\chi_2(a_i)+1\big)+\tau_1\prod_{i=1}^s\big(\sqrt{q}\chi_2(a_i)-1\big)}{2^{s+1}}\right),\]
where $\tau_0=\nu(b)+\sqrt{q}\chi_2(b)$ and $\tau_1=\nu(b)-\sqrt{q}\chi_2(b)$.
\end{theorem}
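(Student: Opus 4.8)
The plan is to exploit the one special feature of a Fermat prime: $q-1=2^{m}$ is a power of two, so the only squarefree divisors of $q-1$ are $1$ and $2$. Starting from the standard character expression for the indicator of primitive elements, $\mathbf 1_{\mathrm{prim}}(x)=\frac{\varphi(q-1)}{q-1}\sum_{d\mid q-1}\frac{\mu(d)}{\varphi(d)}\sum_{\ordem(\chi)=d}\chi(x)$ with $\mu$ the M\"obius function, only the terms $d=1$ and $d=2$ survive. Since $\varphi(q-1)/(q-1)=\tfrac12$ and the unique character of order $2$ is $\chi_2$, this collapses to $\mathbf 1_{\mathrm{prim}}(x)=\tfrac12\bigl(1-\chi_2(x)\bigr)$; equivalently, in a field whose multiplicative group has $2$-power order the primitive elements are precisely the quadratic non-residues. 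This collapse is what makes an exact closed formula possible.

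Next I would detect the hyperplane with the canonical additive character $\psi$ of $\Fq$, writing
\[
P_q(f)=\frac1q\sum_{t\in\Fq}\psi(-tb)\prod_{i=1}^{s}\Bigl(\sum_{x\in\Fq^{*}}\tfrac12\bigl(1-\chi_2(x)\bigr)\psi(t a_i x)\Bigr),
\]
where the inner sum has factored across the variables. Everything then reduces to two elementary evaluations: $\sum_{x\in\Fq^{*}}\psi(t a_i x)=\nu(t a_i)=\nu(t)$ and, for $t\neq0$, $\sum_{x\in\Fq^{*}}\chi_2(x)\psi(t a_i x)=\chi_2(t)\chi_2(a_i)\,G(\chi_2)$, where $G(\chi_2)$ is the quadratic Gauss sum. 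Here the hypothesis $q>3$ enters decisively: a Fermat prime $q>3$ satisfies $q\equiv1\pmod4$, so $\chi_2(-1)=1$ and $G(\chi_2)=\sqrt q$ is real. For $q=3$ one would instead meet a purely imaginary Gauss sum, which is exactly why that value is excluded.

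With these substitutions the $t=0$ term contributes $\prod_{i=1}^s\tfrac{q-1}{2}=\varphi(q-1)^{s}$, producing the leading term $\varphi(q-1)^{s}/q$. For $t\neq0$ each factor equals $\tfrac12\bigl(-1-\chi_2(t)\chi_2(a_i)\sqrt q\bigr)$, so I would split the remaining sum over $t\in\Fq^{*}$ according to whether $t$ is a quadratic residue ($\chi_2(t)=1$) or a non-residue ($\chi_2(t)=-1$). The residue block yields the factor $(-1)^{s}\prod_{i}\bigl(\sqrt q\,\chi_2(a_i)+1\bigr)$ and the non-residue block yields $\prod_{i}\bigl(\sqrt q\,\chi_2(a_i)-1\bigr)$, weighted respectively by $A=\sum_{\chi_2(t)=1}\psi(-tb)$ and $B=\sum_{\chi_2(t)=-1}\psi(-tb)$.

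The final step is to evaluate $A$ and $B$, which I would do through the pair of identities $A+B=\sum_{t\in\Fq^{*}}\psi(-tb)=\nu(b)$ and $A-B=\sum_{t\in\Fq^{*}}\chi_2(t)\psi(-tb)=\chi_2(-1)\chi_2(b)\,G(\chi_2)=\sqrt q\,\chi_2(b)$, both valid uniformly in $b$ once one adopts $\chi_2(0)=0$ so that the case $b=0$ is absorbed. Solving gives $A=\tau_0/2$ and $B=\tau_1/2$; substituting back and collecting the resulting $2^{s+1}$ in the denominator produces the stated formula. The only genuine obstacle is bookkeeping the Gauss sum sign and the value of $\chi_2(-1)$ under the $q\equiv1\pmod4$ hypothesis; once that is pinned down the computation is mechanical, with the cases $b=0$ and $b\neq0$ dispatched simultaneously by $\nu$ and the convention $\chi_2(0)=0$.
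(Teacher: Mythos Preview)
Your proof is correct and takes a genuinely different route from the paper's. The paper instantiates its general machinery: it starts from Proposition~\ref{pf}, the M\"obius-type expansion of $P_q$ over tuples $\mathbf r$ with each $r_i\in\{1,2\}$, passes through Lemma~\ref{N*=NI} to reduce to the counts $N_I(f(\mathbf x^{\mathbf r}))$, and then evaluates those counts case by case via the Lidl--Niederreiter quadratic-form formulas (Theorems~6.26 and~6.27), finally reassembling the pieces with the two binomial-type identities displayed at the end of its proof. You instead jump directly to the observation that when $|\Fq^{*}|$ is a $2$-power the primitive indicator collapses to $\tfrac12(1-\chi_2)$, detect the hyperplane with additive characters, and reduce everything to a single quadratic Gauss sum followed by a residue/non-residue split over the auxiliary variable $t$. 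Your path is shorter and self-contained, and it makes transparent exactly where $q>3$ is used (to force $q\equiv1\pmod4$ and hence a real Gauss sum); the paper's route has the compensating advantage of reusing the template already built for Theorems~\ref{teo1} and~\ref{fermat-cota}, so that the explicit formula emerges as a specialization of the general framework rather than via a bespoke computation. One small remark: you invoke $G(\chi_2)=+\sqrt q$, which is Gauss's sign determination, but in fact only $G(\chi_2)^2=q$ (realness) is needed here, since flipping the sign of $G$ simply swaps the roles of your sums $A$ and $B$ while leaving the final expression unchanged.
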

It is worth noting that the existence of infinitely many Fermat primes is still an open question. Nevertheless, we can increase the dimension $s$ of the hyperplane as much as desired, allowing us to obtain an infinite family of hyperplanes. As a direct consequence of Theorem~\ref{numexato}, we have the following result.
\begin{corollary}\label{corexato}
\label{plinear}
Let $f(x_1,\ldots,x_s)=x_1+x_2+\cdots+x_s$ be a linear form over $\mathbb{F}_q$, where $q>3$ is a Fermat prime. Then 
\[P_q(x_1+\cdots+x_s)=\frac{\varphi(q-1)^s}{q}+\frac{q-1}{q}\left(\frac{(\sqrt{q}-1)^s+(-1)^s(\sqrt{q}+1)^s}{2^{s+1}}\right)\]
\end{corollary}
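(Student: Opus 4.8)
The plan is to obtain Corollary~\ref{corexato} as a direct specialization of Theorem~\ref{numexato}, taking $a_1=\cdots=a_s=1$ and $b=0$ in the explicit formula. No new machinery is required; the whole argument reduces to evaluating the quantities $\tau_0$, $\tau_1$ and the two products appearing in that theorem at these particular values, and then rearranging.

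First I would compute $\tau_0$ and $\tau_1$ in the case $b=0$. By the conventions fixed just before Theorem~\ref{numexato}, we have $\nu(0)=q-1$, and extending the multiplicative character to $0$ by $\chi_2(0)=0$ gives $\sqrt{q}\,\chi_2(0)=0$. Hence $\tau_0=\nu(0)+\sqrt{q}\,\chi_2(0)=q-1$ and likewise $\tau_1=\nu(0)-\sqrt{q}\,\chi_2(0)=q-1$, so that $\tau_0=\tau_1=q-1$. Next I would evaluate the two products: since $1$ is a square in $\mathbb{F}_q^*$ we have $\chi_2(1)=1$, whence each factor $\sqrt{q}\,\chi_2(a_i)+1$ equals $\sqrt{q}+1$ and each factor $\sqrt{q}\,\chi_2(a_i)-1$ equals $\sqrt{q}-1$. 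Therefore $\prod_{i=1}^s(\sqrt{q}\,\chi_2(a_i)+1)=(\sqrt{q}+1)^s$ and $\prod_{i=1}^s(\sqrt{q}\,\chi_2(a_i)-1)=(\sqrt{q}-1)^s$.

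Substituting these values into the formula of Theorem~\ref{numexato} and factoring out the common $q-1$ yields
\[
P_q(x_1+\cdots+x_s)=\frac{\varphi(q-1)^s}{q}+\frac{q-1}{q}\left(\frac{(-1)^s(\sqrt{q}+1)^s+(\sqrt{q}-1)^s}{2^{s+1}}\right),
\]
which is exactly the claimed expression after swapping the order of the two summands in the numerator. The only step requiring any care is the use of the conventions $\nu(0)=q-1$ and $\chi_2(0)=0$ in the degenerate case $b=0$; these are precisely the ones fixed in the statement of Theorem~\ref{numexato}, so beyond this bookkeeping the computation is a routine substitution and I do not anticipate any genuine obstacle.
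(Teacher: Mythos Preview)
Your proposal is correct and follows exactly the route the paper intends: the corollary is stated in the paper as ``a direct consequence of Theorem~\ref{numexato}'', and your substitution $a_1=\cdots=a_s=1$, $b=0$ together with the evaluations $\tau_0=\tau_1=q-1$ and $\chi_2(1)=1$ is precisely that direct consequence made explicit.
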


We highlight that the case $q=3$ can also be derived in Theorem~\ref{numexato} and Corollary~\ref{corexato} by applying a suitable modification to the proof of Theorem~\ref{quadratic}. To conclude this section, we present an illustrative example.

\begin{example}
    Consider the equation $x+y+z=0$ on $\mathbb{F}_q^3$, where $q>3$ is a Fermat prime. In this case, Corollary~\ref{corexato} reads
$$P_q(x_1+x_2+x_3)=\frac{\varphi(q-1)^3}{q}+\frac{q-1}{q}\left(\frac{(\sqrt{q}-1)^3-(\sqrt{q}+1)^3}{16}\right)=\frac{q^2-6q+5}{8}.$$
\end{example}


\section{Preliminaries}

In this section we provide some well-known definitions and results that will be important in the proof of the main result. Throughout the text, $\varphi(n)$ denotes the Euler totient function, that is, $\varphi(n)$ is the number of positive integers up to $n$  that are relatively prime to $n$. The following lower bound on $\varphi(n)$, derived from \cite{RosserShoenfel}, will be crucial in our proofs. 
\begin{lemma}\cite[Theorem $15$]{RosserShoenfel}\label{phibound}
Let $\gamma$ be the Euler-Mascheroni constant. For an integer $n\ge 2$, it follows that 
 \begin{equation}
 \label{boundeuler}
 \frac{n}{\varphi(n)} < e^{\gamma}ln(ln(n))+\frac{5}{2ln(ln(n))}    
 \end{equation}
 with one exception for $n = 223092870 = 2 \cdot 3 \cdot 7 \cdot 11 \cdot 13 \cdot 17 \cdot 19 \cdot 23$.
\end{lemma}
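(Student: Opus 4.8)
The plan is to follow the classical extremal-plus-effective-estimates strategy for bounding $n/\varphi(n)$. The starting observation is that
$$\frac{n}{\varphi(n)}=\prod_{p\mid n}\frac{p}{p-1}$$
depends only on the set of primes dividing $n$, and each factor $\frac{p}{p-1}=1+\frac{1}{p-1}$ exceeds $1$ and is decreasing in $p$. Hence among all integers having exactly $k$ distinct prime divisors, the ratio $n/\varphi(n)$ is largest when those primes are the $k$ smallest primes $p_1<\cdots<p_k$, that is, when $n$ equals the $k$-th primorial $N_k=p_1\cdots p_k$; moreover any such $n$ satisfies $n\ge N_k$. Since the right-hand side $e^{\gamma}\ln\ln n+\frac{5}{2\ln\ln n}$ is increasing in $n$ once $n$ is moderately large, the primorials $N_k$ are simultaneously the worst case for the left side and for the right side. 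First I would make this precise to reduce the inequality for a general $n$ to the inequality evaluated at the primorials $N_k$.

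For $n=N_k$ the left-hand side is exactly $\prod_{p\le p_k}\big(1-\tfrac1p\big)^{-1}$, so the next step is to invoke an explicit form of Mertens' third theorem, namely a one-sided bound of the shape
$$\prod_{p\le x}\Big(1-\frac1p\Big)^{-1}<e^{\gamma}\ln x\,\Big(1+\frac{c}{\ln^2 x}\Big)$$
valid with an explicit constant $c$ for $x$ above an effective threshold. Setting $x=p_k$ turns the problem into controlling $\ln p_k$ in terms of $\ln\ln N_k$. Here I would use $\ln N_k=\vartheta(p_k)=\sum_{p\le p_k}\ln p$, the Chebyshev theta function, together with effective bounds $\vartheta(x)=x\big(1+o(1)\big)$ with explicit error, to obtain $\ln\ln N_k=\ln\vartheta(p_k)=\ln p_k+O(1/\ln p_k)$. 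Substituting this expansion into the Mertens bound and bookkeeping the error terms produces an inequality of exactly the form $e^{\gamma}\ln\ln N_k+\frac{C}{\ln\ln N_k}$, and the task is to verify that the constant may be taken to be $\tfrac52$.

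Finally I would handle the finitely many small $k$ by direct computation, since the explicit estimates above only take effect once $p_k$ exceeds some bound. This is where the single exception surfaces: the computation shows that the bound fails at $n=N_9=2\cdot3\cdot5\cdot7\cdot11\cdot13\cdot17\cdot19\cdot23=223092870$, and the reduction together with the strict growth of the right-hand side then confirms that $N_9$ is the unique integer exception. The main obstacle is not the structure of the argument but its effectivity: one must make both Mertens' product bound and the Chebyshev $\vartheta$-bound fully explicit and uniform, and then carefully propagate the error in $\vartheta(x)\approx x$ through the double logarithm $\ln\ln N_k=\ln\vartheta(p_k)$, so that the correction term lands at precisely $\frac{5}{2\ln\ln n}$ with no spurious additional exception. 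This explicit analytic number theory is exactly the content supplied by Rosser and Schoenfeld, whose effective estimates I would simply reproduce.
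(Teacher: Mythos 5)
This lemma is not proved in the paper at all: it is imported verbatim as \cite[Theorem 15]{RosserShoenfel}, so there is no internal proof to compare against, and your sketch should be measured against Rosser and Schoenfeld's actual argument. On that score your outline is faithful: their proof does reduce to primorials $N_k=p_1\cdots p_k$ via the observation that $n/\varphi(n)=\prod_{p\mid n}p/(p-1)$ depends only on the radical of $n$, does combine an explicit Mertens-type bound on $\prod_{p\le x}(1-1/p)^{-1}$ with their effective estimates for $\vartheta(x)$ through the identity $\ln N_k=\vartheta(p_k)$, and does dispatch small cases by computation, with $n=223092870=N_9$ emerging as the exception. You are also candid that the effective estimates themselves are exactly the content of the cited source, which puts your proposal on the same footing as the paper's bare citation.

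Two points deserve more care than your sketch gives them. First, your closing claim that ``the reduction together with the strict growth of the right-hand side then confirms that $N_9$ is the unique integer exception'' is too quick: since $N_9$ itself \emph{violates} the bound, monotonicity of the right-hand side alone does not rule out other $n$ with nine distinct prime factors (e.g.\ $n=2N_9$, which has the same radical and hence the same left-hand side); one must separately verify that every proper multiple of $N_9$ and every nine-prime radical other than $\{2,3,\ldots,23\}$ satisfies the inequality, a finite but necessary check. Second, note that Rosser--Schoenfeld's Theorem 15 actually carries the constant $2.50637$, not $5/2$, and is stated for $n\ge 3$; the paper's rounded form with $5/2$ is strictly stronger, and the margin at $N_8=9699690$ is then razor-thin (about $7\times 10^{-4}$), so your plan to ``verify that the constant may be taken to be $\tfrac52$'' is precisely where extra numerical verification beyond the cited theorem would be needed (and the stated range $n\ge 2$ fails outright at $n=2$, where $\ln\ln n<0$). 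These are defects of the paper's transcription as much as of your sketch, but a complete write-up along your lines would have to address both.
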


By taking the reciprocal of both sides of Equation \ref{boundeuler} we establish a lower bound on the ratio of  $\varphi(n)$ to $n$:

\begin{equation}
    \frac{2ln(ln(n))}{2e^{\gamma}[ln(ln(n))]^2+5} < \frac{\varphi(n)}{n}.
\end{equation}




The Möbius function, denoted by $\mu(n)$, is defined as
\[
\mu(n) = 
  \begin{cases}
      (-1)^k, & \mbox{ if } n \mbox{ is the product of k distinct primes} \\
      \quad 0, & \mbox{ otherwise}.  
  \end{cases}
\]
It is well known (see, for instance, Proposition 2.2.4. of \cite{Ireland}) that
\[
n = \sum_{d \mid n} \varphi(d). 
\]
Using the Möbius inversion formula, we obtain:
\begin{lemma}\label{mobiusEuler} We have that
\[
\varphi(n) = \sum_{d \mid n} \mu(d) \cdot \frac{n}{d}.    
\]
\end{lemma}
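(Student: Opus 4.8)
The statement is the Möbius-dual of the identity $n=\sum_{d\mid n}\varphi(d)$ recalled immediately above, so the plan is simply to apply the Möbius inversion formula. Recall that inversion asserts the following: if two arithmetic functions $f,g$ satisfy $g(m)=\sum_{d\mid m}f(d)$ for every positive integer $m$, then $f(m)=\sum_{d\mid m}\mu(d)\,g(m/d)$. I would take $g$ to be the identity function $g(m)=m$ and $f=\varphi$. The hypothesis $g(m)=\sum_{d\mid m}f(d)$ is then precisely the stated fact $m=\sum_{d\mid m}\varphi(d)$, which holds for all $m\ge 1$; applying the inversion formula at $m=n$ yields $\varphi(n)=\sum_{d\mid n}\mu(d)\,\tfrac{n}{d}$, as claimed.

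If one prefers a self-contained argument that does not quote the named inversion theorem, I would expand the right-hand side and interchange the order of summation. Writing $\tfrac{n}{d}=\sum_{e\mid (n/d)}\varphi(e)$ and substituting, the double sum becomes $\sum_{e\mid n}\varphi(e)\sum_{d\mid(n/e)}\mu(d)$; the inner sum equals $1$ if $e=n$ and $0$ otherwise, so the whole expression collapses to $\varphi(n)$. This last property of $\mu$ is itself immediate from multiplicativity, since for $m>1$ one has $\sum_{d\mid m}\mu(d)=\prod_{p\mid m}\big(1+\mu(p)\big)=\prod_{p\mid m}(1-1)=0$, while the sum is $1$ for $m=1$. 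A third, entirely independent route is the inclusion-exclusion evaluation $\varphi(n)=n\prod_{p\mid n}\big(1-\tfrac1p\big)$, whose expansion reproduces $\sum_{d\mid n}\mu(d)\tfrac nd$ once one notes that $\mu(d)=0$ unless $d$ is squarefree.

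There is essentially no obstacle here: the result is a direct consequence of Möbius inversion applied to a relation that has already been established. The only point requiring care is to confirm that the summatory identity $m=\sum_{d\mid m}\varphi(d)$ holds for \emph{every} positive integer, and not merely for the particular $n$ of interest, since the inversion formula consumes this relation at all divisors of $n$; this is guaranteed by the cited Proposition~2.2.4 of \cite{Ireland}.
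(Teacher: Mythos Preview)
Your proposal is correct and matches the paper's approach exactly: the paper simply states the identity $n=\sum_{d\mid n}\varphi(d)$ and then says ``Using the M\"obius inversion formula, we obtain'' the lemma, which is precisely your first paragraph. The additional self-contained derivations you supply go beyond what the paper provides but are sound and unnecessary for the comparison.
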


\begin{lemma}[Lema 3.5, \cite{ReisRibas}]
\label{boundW}
If $W(t)$ is the number of squarefree divisors of $t$, then 
\[
W(t-1) <t^{\frac{0.96}{ln(ln(t))}},
\]
for all $t\geq 3$.
\end{lemma}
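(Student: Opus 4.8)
The plan is to reduce the statement to a sharp estimate on the number of distinct prime factors and then to a finite verification near the extremal configuration. First I would observe that if $m = t-1$ has prime factorization $m = p_{i_1}^{a_1}\cdots p_{i_k}^{a_k}$, then every squarefree divisor of $m$ is a product of a subset of $\{p_{i_1},\dots,p_{i_k}\}$, so that $W(t-1) = 2^{\omega(t-1)}$, where $\omega(m)$ denotes the number of distinct prime divisors of $m$. Taking logarithms, the claimed inequality $W(t-1) < t^{0.96/\ln\ln t}$ is equivalent to
$$\omega(t-1)\,\ln 2\,\ln\ln t < 0.96\,\ln t.$$
Thus it suffices to bound $\omega(t-1)$ sharply from above in terms of $\ln t$.

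For that, I would use the standard extremal principle: among all positive integers with exactly $k$ distinct prime factors, the smallest is the $k$-th primorial $N_k = \prod_{i=1}^k p_i$, where $p_i$ is the $i$-th prime. Hence $\omega(t-1) = k$ forces $t-1 \ge N_k$, and in particular $N_k \le t-1 < N_{k+1}$ implies $\omega(t-1) \le k$. Writing $h(t) = \ln\ln t / \ln t$, the left-hand side above is then at most $k\,\ln 2\,h(t)$ on the block $N_k \le t-1 < N_{k+1}$. Since $h$ is decreasing for $t > e^{e}$ (because $\tfrac{d}{du}\tfrac{\ln u}{u} = \tfrac{1-\ln u}{u^2} < 0$ for $u = \ln t > e$), the maximum of this bound on each block is attained at its left endpoint $t = N_k + 1$. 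The problem therefore collapses to the single-parameter inequality
$$k\,\ln 2\,\frac{\ln\ln(N_k+1)}{\ln(N_k+1)} < 0.96 \qquad \text{for every } k \ge 1,$$
together with a direct check of the few small $t \le e^{e}$ where $h$ is not yet decreasing.

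To dispatch all sufficiently large $k$ I would feed explicit Rosser–Schoenfeld estimates for $\ln N_k = \vartheta(p_k)$ and for $p_k$ into the last inequality, where $\vartheta$ is the first Chebyshev function. Since $\vartheta(p_k) \sim p_k \sim k\ln k$, one obtains $\ln N_k \sim k\ln k$ and $\ln\ln N_k \sim \ln k$, so the left-hand side tends to $\ln 2 \approx 0.693$, comfortably below $0.96$; making the error terms explicit shows the inequality for all $k$ beyond a modest threshold $k_0$, and the remaining finitely many $k \le k_0$ are verified by direct computation.

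The main obstacle is that the constant $0.96$ is genuinely tight. Evaluating $k\,\ln 2\,h(N_k+1)$ shows that it \emph{increases} with $k$ through the first several primorials, peaking near $k=9$, i.e.\ at $N_9 = 223092870$ (the same integer that appears as the exception in Lemma~\ref{phibound}), where the ratio is approximately $0.959$, before decreasing back toward $\ln 2$. Consequently the asymptotic argument alone cannot close the gap: one must combine sufficiently sharp explicit bounds on $\vartheta(p_k)$ with a careful finite verification around the maximizing primorials in order to certify that the ratio never actually reaches $0.96$.
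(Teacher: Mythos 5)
Note first that the paper contains no proof of this lemma at all: it is imported verbatim as Lemma 3.5 of \cite{ReisRibas}, so there is no internal argument to compare against. What you have done is reconstruct the classical proof of this type of bound (going back to Hatalova and Salat \cite{HatalovaSalat}, which is exactly the lineage behind the Reis--Ribas constant), and your reconstruction is sound in all its essentials. The reduction $W(t-1)=2^{\omega(t-1)}$, the equivalence with $\omega(t-1)\ln 2\,\ln\ln t<0.96\ln t$, the primorial extremality $\omega(m)=k\Rightarrow m\ge N_k$, and the monotone decrease of $h(t)=\ln\ln t/\ln t$ for $t>e^e$ are all correct, and they do collapse the problem to the one-parameter family of inequalities $k\ln 2\, h(N_k+1)<0.96$ plus a trivial check for $t\le e^e$ (note only that on the block $k=2$ the point $e^e$ lies in the interior, so there you must bound $h$ by its global maximum $h(e^e)=1/e$, giving $2\ln 2/e\approx 0.51$, which your ``direct check of small $t$'' remark implicitly covers). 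Your quantitative claims check out: the sequence $k\ln 2\,h(N_k+1)$ peaks at $k=9$, $N_9=223092870$, with value $\approx 0.9593$, then decreases through $\approx 0.9566$ ($k=10$), $\approx 0.9548$ ($k=11$) toward $\ln 2$; indeed $0.96$ is essentially the rounding of the optimal Hatalova--Salat constant $1.3841\ln 2\approx 0.9594$, so your identification of $N_9$ (the same integer that is exceptional in Lemma~\ref{phibound}) as the genuine obstruction is exactly right. The only caveat is that your argument is a program rather than a finished proof: the margin at $k=9$ is about $0.0007$, so the finite verification must be carried out with certified arithmetic on the exact primorials, and the tail needs explicit constants (e.g.\ $\vartheta(x)>x(1-1/\ln x)$ for $x\ge 41$ together with $p_k\ge k\ln k$) rather than the asymptotic $\vartheta(p_k)\sim k\ln k$; both steps are routine but, given how close the extremal ratio comes to $0.96$, they are where all the actual work lies.
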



Let $\psi$ be the canonical additive character from $\Fq$ to $\Fp$. For $r$ a divisor of $q-1$, let $\chi_r$ be a multiplicative character of order $r$ of $\Fq^*$. As usual, we extend $\chi_r$ to $\Fq$ by setting $\chi_r(0)=0$. 

\begin{lemma}\cite[Theorem $5.4$]{lidl1997finite}\label{item123} We have that
    $$\sum_{x\in\Fq}\psi(x)=0.$$
\end{lemma}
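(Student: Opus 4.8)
The plan is to deduce the identity from the general fact that a nontrivial character of a finite abelian group sums to zero, applied to $G=(\Fq,+)$ with $\chi=\psi$. Recall that the canonical additive character is $\psi(x)=\zeta_p^{\tr(x)}$, where $\zeta_p=e^{2\pi i/p}$ is a primitive $p$-th root of unity and $\tr=\tr_{\Fq/\Fp}$ is the absolute trace. First I would record that $\psi$ is a homomorphism from $(\Fq,+)$ into $\C^*$: since the trace is additive, $\tr(x+y)=\tr(x)+\tr(y)$, one gets $\psi(x+y)=\zeta_p^{\tr(x)+\tr(y)}=\psi(x)\psi(y)$.

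The one step that requires genuine justification is the \emph{nontriviality} of $\psi$, i.e.\ the existence of some $h\in\Fq$ with $\psi(h)\neq 1$. This reduces to showing that $\tr\colon\Fq\to\Fp$ does not vanish identically. Writing $q=p^n$, the trace is given by the polynomial map $x\mapsto x+x^p+\cdots+x^{p^{n-1}}$, a nonzero polynomial of degree $p^{n-1}<q$, which therefore cannot vanish at all $q$ points of $\Fq$. Hence some $h$ satisfies $\tr(h)\neq 0$, and for such $h$ we have $\psi(h)=\zeta_p^{\tr(h)}\neq 1$.

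Finally I would run the standard orthogonality argument. Put $S=\sum_{x\in\Fq}\psi(x)$. Since the translation $x\mapsto x+h$ is a bijection of $\Fq$ and $\psi$ is multiplicative,
$$\psi(h)\,S=\sum_{x\in\Fq}\psi(h)\psi(x)=\sum_{x\in\Fq}\psi(x+h)=\sum_{y\in\Fq}\psi(y)=S,$$
so $(\psi(h)-1)S=0$, and because $\psi(h)\neq1$ this forces $S=0$. I expect the surjectivity of the trace to be the only real obstacle; everything else is formal. A concrete alternative that sidesteps the translation trick is to sort the sum by the value of the trace, writing $S=\sum_{t\in\Fp}N_t\,\zeta_p^{t}$ with $N_t=|\{x\in\Fq:\tr(x)=t\}|$; surjectivity and $\Fp$-linearity of $\tr$ make every fiber have the common size $N_t=q/p$, whence $S=(q/p)\sum_{t\in\Fp}\zeta_p^{t}=0$, using that the $p$-th roots of unity sum to zero.
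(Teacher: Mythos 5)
Your proof is correct, and the paper itself offers no argument for this lemma---it is quoted directly from Lidl--Niederreiter, whose Theorem 5.4 proof is exactly your translation/orthogonality argument for a nontrivial character. Your justification of the one nonformal step, the nontriviality of $\psi$ via the trace polynomial $x+x^p+\cdots+x^{p^{n-1}}$ having degree $p^{n-1}<q$, is precisely what the standard treatment requires, so nothing is missing.
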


The following result is a direct consequence of Theorem 5.8 of \cite{lidl1997finite}.

\begin{lemma}\label{quachar} Let $q$ be odd and $\chi_2$ be the quadratic character of $\Fq$. Then
    $$\chi_2(-1)=(-1)^{\frac{q-1}{2}}.$$
\end{lemma}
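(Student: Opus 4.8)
The plan is to reduce the identity to the cyclic structure of $\Fq^*$. Since $q$ is odd, $\Fq^*$ is cyclic of even order $q-1$; fix a generator $g$. The quadratic character $\chi_2$ is, by definition, the unique multiplicative character of order $2$, so $\chi_2(g)$ is a primitive square root of unity, forcing $\chi_2(g)=-1$ and hence $\chi_2(g^k)=(-1)^k$ for every integer $k$.

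The second step is to locate $-1$ inside this cyclic group. Because $q$ is odd we have $-1\neq 1$, so $-1$ is an element of order exactly $2$ in $\Fq^*$. The equation $x^2=1$ has at most two roots in the field $\Fq$, namely $\pm 1$, so $-1$ is the \emph{unique} element of order $2$. As $\Fq^*$ is cyclic of order $q-1$, its unique element of order $2$ is $g^{(q-1)/2}$: this power is well defined because $(q-1)/2\in\Z$, its square equals $g^{q-1}=1$, and it is not $1$ since $0<(q-1)/2<q-1$. Therefore $-1=g^{(q-1)/2}$.

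Combining the two steps gives $\chi_2(-1)=\chi_2\big(g^{(q-1)/2}\big)=(-1)^{(q-1)/2}$, which is the claim. An equivalent route, which avoids choosing a generator explicitly, is to invoke the Euler-type criterion $\chi_2(a)=a^{(q-1)/2}$ for $a\in\Fq^*$: the map $a\mapsto a^{(q-1)/2}$ takes values in $\{\pm 1\}$ (its square is $a^{q-1}=1$), is a nontrivial homomorphism, and hence coincides with the unique order-$2$ character $\chi_2$; evaluating at $a=-1$ then yields the result directly.

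There is no genuine obstacle here, as the statement is elementary; the only point requiring care is the identification $-1=g^{(q-1)/2}$, namely that $-1$ is the unique element of order $2$. This is precisely where the hypothesis that $q$ is odd enters, ensuring both that $-1\neq 1$ and that $(q-1)/2$ is an integer.
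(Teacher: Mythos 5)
Your proof is correct and is essentially the paper's approach: the paper gives no argument of its own, merely citing Theorem 5.8 of Lidl--Niederreiter (the explicit description of multiplicative characters on a generator $g$), and your computation --- $\chi_2(g)=-1$ combined with the identification $-1=g^{(q-1)/2}$ as the unique element of order $2$ --- is exactly the calculation underlying that citation. The Euler-criterion variant you sketch is an equally standard route to the same identity.
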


\begin{lemma}[{{\cite[Equation $5.4$, p. 189]{lidl1997finite}}}]\label{dpower}
Let $d$ be a divisor of $q-1$. If $c \in\Fq$, then
\[\sum_{j=0}^{d-1} \chi_d^j(c) = \begin{cases}
    1, & \text{ if } c=0\\
	d, & \text{ if } c \text{ is a } d \text{-power in } \Fq^*\\
	0,& \text{ otherwise.}
\end{cases}\]
\end{lemma}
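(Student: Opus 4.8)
The plan is to prove the identity by splitting into the three cases exactly as they appear in the statement, reducing the sum in each case to an elementary computation. The only structural facts I need beyond the definitions are that $\Fq^*$ is cyclic of order $q-1$ and that $d\mid q-1$, together with the extension convention under which the trivial character $\chi_d^0$ is identically $1$ (in particular $\chi_d^0(0)=1$), while each nontrivial power $\chi_d^j$ with $1\le j\le d-1$ satisfies $\chi_d^j(0)=0$.

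First I would dispose of the case $c=0$. Here only the $j=0$ term survives, since $\chi_d^0(0)=1$ and $\chi_d^j(0)=0$ for $1\le j\le d-1$ by the extension convention; hence the sum equals $1$. Next I would treat $c\in\Fq^*$ uniformly by exploiting that $\chi_d^d$ is the trivial character, because $\chi_d$ has order $d$. Consequently $\chi_d(c)$ is a $d$-th root of unity for every $c\in\Fq^*$, and writing $\zeta:=\chi_d(c)$ turns the sum into the geometric series $\sum_{j=0}^{d-1}\zeta^j$. If $\zeta=1$ this equals $d$; if $\zeta\neq 1$ then, because $\zeta^d=\chi_d^d(c)=1$, the series telescopes via $(\zeta^d-1)/(\zeta-1)=0$.

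It therefore remains to characterize when $\zeta=\chi_d(c)=1$, and this is the one step requiring a genuine argument: I claim $\chi_d(c)=1$ if and only if $c$ is a $d$-th power in $\Fq^*$. To establish this I would fix a generator $g$ of the cyclic group $\Fq^*$ and write $c=g^k$. Since $\chi_d$ has order $d$ and $d\mid q-1$, the value $\chi_d(g)$ is a primitive $d$-th root of unity, so $\chi_d(c)=\chi_d(g)^k=1$ precisely when $d\mid k$, which is exactly the condition that $c$ belongs to the index-$d$ subgroup of $d$-th powers. Combining this equivalence with the geometric-series computation gives the value $d$ when $c$ is a nonzero $d$-th power and $0$ when $c\in\Fq^*$ is not a $d$-th power, covering all three cases.

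The main (and essentially only) obstacle is this last equivalence between $\chi_d(c)=1$ and $c$ being a $d$-th power; once the generator $g$ is fixed it reduces to the divisibility statement $d\mid k$, after which everything else is routine bookkeeping with the character conventions and the finite geometric series. I would take care to state the extension convention explicitly at the outset, since the value $1$ in the case $c=0$ depends on it.
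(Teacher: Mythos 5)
Your proof is correct. The paper does not prove this lemma at all --- it is quoted directly from Lidl--Niederreiter \cite[Equation 5.4, p.~189]{lidl1997finite} --- and your argument (geometric series in $\zeta=\chi_d(c)$, plus the equivalence $\chi_d(c)=1 \Leftrightarrow c$ is a $d$-th power, proved by writing $c=g^k$ for a generator $g$) is precisely the standard textbook derivation of that identity. You were also right to flag the convention at $0$ explicitly: the value $1$ in the case $c=0$ requires taking the trivial character $\chi_d^0$ to be identically $1$ (so $\chi_d^0(0)=1$), whereas the paper's blanket convention $\chi_r(0)=0$ is stated only for the nontrivial extensions; this is exactly the convention under which the cited equation holds in Lidl--Niederreiter.
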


\begin{definition}
	Let $\lambda_1,\dots,\lambda_s$ be multiplicative characters of $\Fq^*$ and $b\in\Fq$. The \textit{Jacobi sum} of $\lambda_1,\ldots, \lambda_s$ is defined as 
		$$J_b(\lambda_1,\ldots,\lambda_s)=\sum_{\substack{b_1+\cdots+ b_s=b\\ (b_1,\ldots,b_s)\in\Fq^s}}\lambda_1(b_1)\cdots\lambda_s(b_s).$$
\end{definition}

One can verify that 
$$J_b(\lambda_1,\ldots,\lambda_s)=\lambda_1(b)\cdots\lambda_s(b)J_1(\lambda_1,\ldots,\lambda_s)$$
for all $b\in\Fq^*$, fact that will be extensively used in the paper. Throughout the paper, we set $J(\lambda_1,\ldots,\lambda_s)=J_1(\lambda_1,\ldots,\lambda_s)$.

\begin{proposition}\cite[Theorem $5.19$]{lidl1997finite}\label{item1519} Let $\lambda_1,\ldots,\lambda_s$ be multiplicative characters of $\Fq^*$, then 
	$$J(\lambda_1,\ldots,\lambda_s)=\begin{cases}
	q^{s-1},&\text{ if } \lambda_1,\dots,\lambda_s\text{ are all trivial}\\
	0,&\text{ if some, but not all, of }\lambda_1,\dots,\lambda_s\text{ are trivial.}\\
	\end{cases}$$
\end{proposition}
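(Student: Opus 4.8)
The plan is to evaluate $J(\lambda_1,\dots,\lambda_s)=\sum_{b_1+\cdots+b_s=1}\lambda_1(b_1)\cdots\lambda_s(b_s)$ directly from the definition, splitting into the two cases and using two elementary facts: the trivial character equals $1$ on all of $\Fq$ (in particular at $0$), whereas a nontrivial multiplicative character $\lambda$ satisfies $\lambda(0)=0$ together with the orthogonality relation $\sum_{b\in\Fq}\lambda(b)=0$. The latter is the multiplicative analogue of Lemma~\ref{item123} and follows in one line: picking $a\in\Fq^*$ with $\lambda(a)\neq1$ and reindexing $b\mapsto ab$ gives $\lambda(a)\sum_{b\in\Fq}\lambda(b)=\sum_{b\in\Fq}\lambda(b)$, which forces the sum to vanish.

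For the first case, suppose all the $\lambda_i$ are trivial. Then every factor $\lambda_i(b_i)$ equals $1$, so $J$ counts the number of tuples $(b_1,\dots,b_s)\in\Fq^s$ subject to the single linear constraint $b_1+\cdots+b_s=1$. Choosing $b_1,\dots,b_{s-1}$ freely in $\Fq$ and setting $b_s=1-b_1-\cdots-b_{s-1}$ produces each such tuple exactly once, giving $q^{s-1}$ solutions and hence $J=q^{s-1}$.

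For the second case, suppose some but not all of the characters are trivial. Since the defining sum is invariant under simultaneously permuting the characters and the summation variables, I would relabel so that $\lambda_s$ is trivial, which is possible because at least one character is trivial. As $\lambda_s(b_s)=1$ for every $b_s\in\Fq$, that factor disappears, and for each $(b_1,\dots,b_{s-1})\in\Fq^{s-1}$ there is a unique value $b_s=1-b_1-\cdots-b_{s-1}$ completing a valid tuple; the linear constraint therefore becomes vacuous and the sum factors as $J=\prod_{i=1}^{s-1}\big(\sum_{b\in\Fq}\lambda_i(b)\big)$. Because not all characters are trivial while $\lambda_s$ is, at least one $\lambda_j$ with $1\le j\le s-1$ is nontrivial, and the corresponding factor $\sum_{b\in\Fq}\lambda_j(b)=\lambda_j(0)+\sum_{b\in\Fq^*}\lambda_j(b)=0$ kills the product, so $J=0$.

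The argument is short, so there is no serious obstacle; the one point that must be handled with care is the convention for the trivial character at $0$. Obtaining $q^{s-1}$ in the first case and making the factorization valid in the second both rely on the trivial character taking the value $1$ at $0$, in contrast to the extension $\chi_r(0)=0$ used for characters of order $r>1$. Flagging this convention explicitly is the only real bookkeeping needed to make the proof watertight.
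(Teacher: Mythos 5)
Your proof is correct and is essentially the same argument as the source: the paper offers no proof of this proposition, citing it as Theorem 5.19 of Lidl--Niederreiter, whose proof is precisely your two-case computation (a direct count of the $q^{s-1}$ tuples when all characters are trivial; eliminating a trivial character to factor the sum and killing it with the orthogonality relation $\sum_{b\in\mathbb{F}_q}\lambda(b)=0$ for a nontrivial $\lambda$). The convention you rightly flag, $\lambda(0)=1$ for the trivial character as opposed to $\lambda(0)=0$ for nontrivial ones, is exactly the one the paper adopts explicitly in Equation~\eqref{idezero}, so your bookkeeping is consistent with the text.
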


\begin{proposition}\cite[Theorem $5.22$]{lidl1997finite}\label{item519} Let $\lambda_1,\ldots,\lambda_s$ be nontrivial multiplicative characters of $\Fq^*$, then 
	$$|J(\lambda_1,\ldots,\lambda_s)|=\begin{cases}
	q^{\frac{s-1}{2}},&\text{ if } \lambda_1\cdots\lambda_s\text{ is nontrivial}\\
	q^{\frac{s-2}{2}},&\text{ if }\lambda_1\cdots\lambda_s\text{ is trivial.}\\
	\end{cases}$$
\end{proposition}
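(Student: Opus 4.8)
The plan is to reduce the computation of $|J(\lambda_1,\ldots,\lambda_s)|$ to the known magnitude of Gauss sums, exploiting the multiplicative structure of the summation. For a nontrivial multiplicative character $\lambda$ of $\Fq^*$ and the canonical additive character $\psi$, introduce the Gauss sum $G(\lambda)=\sum_{c\in\Fq^*}\lambda(c)\psi(c)$. The cornerstone of the argument is the identity $|G(\lambda)|=q^{1/2}$ for nontrivial $\lambda$, which I would establish first by expanding $|G(\lambda)|^2=G(\lambda)\overline{G(\lambda)}$, performing the substitution $c\mapsto dt$ in the resulting double sum over $\Fq^*\times\Fq^*$, and then invoking Lemma~\ref{item123} together with the orthogonality relation $\sum_{t\in\Fq^*}\lambda(t)=0$ valid for nontrivial $\lambda$.

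The central step is a product formula linking $G(\lambda_1)\cdots G(\lambda_s)$ to the Jacobi sum. Writing out this product as a single sum over $(c_1,\ldots,c_s)\in(\Fq^*)^s$ and grouping the terms according to the value $t=c_1+\cdots+c_s$, I would treat the slices $t\neq 0$ and $t=0$ separately. For $t\neq 0$, the substitution $c_i=tb_i$ with $b_1+\cdots+b_s=1$ and the homogeneity relation $J_t(\lambda_1,\ldots,\lambda_s)=(\lambda_1\cdots\lambda_s)(t)\,J(\lambda_1,\ldots,\lambda_s)$ recorded after the definition of Jacobi sums extract a factor $J(\lambda_1,\ldots,\lambda_s)$. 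This yields
$$G(\lambda_1)\cdots G(\lambda_s)=J(\lambda_1,\ldots,\lambda_s)\sum_{t\neq 0}(\lambda_1\cdots\lambda_s)(t)\psi(t)+J_0(\lambda_1,\ldots,\lambda_s),$$
where $J_0$ denotes the $t=0$ slice $\sum_{c_1+\cdots+c_s=0}\lambda_1(c_1)\cdots\lambda_s(c_s)$.

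The two cases of the statement now diverge according to the triviality of $\lambda:=\lambda_1\cdots\lambda_s$. When $\lambda$ is nontrivial, a scaling argument ($c_i\mapsto uc_i$ with $u\in\Fq^*$ preserves the constraint $\sum_i c_i=0$ and multiplies the summand by $\lambda(u)$) forces $J_0=0$, while $\sum_{t\neq 0}\lambda(t)\psi(t)=G(\lambda)$; hence $G(\lambda_1)\cdots G(\lambda_s)=J(\lambda_1,\ldots,\lambda_s)\,G(\lambda)$, and taking absolute values gives $q^{s/2}=|J|\,q^{1/2}$, that is $|J|=q^{(s-1)/2}$. When $\lambda$ is trivial, I would instead compute $J_0$ from the global identity $\sum_{t\in\Fq}J_t(\lambda_1,\ldots,\lambda_s)=\prod_i\big(\sum_{c_i\in\Fq}\lambda_i(c_i)\big)=0$, which holds because each $\lambda_i$ is nontrivial; since $J_t=J$ for every $t\neq 0$, this gives $J_0=-(q-1)J$, and combining it with $\sum_{t\neq 0}\psi(t)=-1$ (Lemma~\ref{item123}) produces $G(\lambda_1)\cdots G(\lambda_s)=-qJ$, whence $q^{s/2}=q\,|J|$ and $|J|=q^{(s-2)/2}$.

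The main obstacle I anticipate is the Gauss sum magnitude $|G(\lambda)|=q^{1/2}$: it is the only genuinely nonelementary input, and the slicing around $t=0$ and $t\neq 0$ must be arranged so that the orthogonality cancellations land precisely on it. Once that identity and the product formula are in hand, both cases follow by taking absolute values, with care needed only to track the homogeneity relation for $J_t$ and to separate the trivial from the nontrivial behavior of $\lambda_1\cdots\lambda_s$.
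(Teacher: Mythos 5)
Your proposal is correct and follows essentially the same route as the source: the paper gives no proof of this statement but cites it as Theorem 5.22 of Lidl--Niederreiter, whose argument is precisely your Gauss-sum factorization ($G(\lambda_1)\cdots G(\lambda_s)=J(\lambda_1,\ldots,\lambda_s)\,G(\lambda_1\cdots\lambda_s)$ when $\lambda_1\cdots\lambda_s$ is nontrivial, and $=-q\,J(\lambda_1,\ldots,\lambda_s)$ when it is trivial) combined with $|G(\lambda)|=q^{1/2}$. Note only that your slicing over $(\Fq^*)^s$ matches the paper's definition of $J_t$ over $\Fq^s$ because each $\lambda_i$ is nontrivial and $\lambda_i(0)=0$, a convention your argument uses implicitly.
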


The following result is a particular case of Theorem 1.1 of Katz~\cite{katz}.
\begin{theorem}\label{katz}
    Let $\lambda_1,\dots,\lambda_s$ be multiplicative characters of $\Fq^*$. If $f\in\Fq[x_1,\dots,x_s]$ is a prime-to-$p$ degree $d$ is Dwork-regular polynomial, then
    $$\left|\sum_{(x_1,\dots,x_s)\in \Fq^s}\psi(f(x_1,\dots,x_s))\lambda_1(x_1)\cdots\lambda_s(x_s)\right|\le d^s q^{\frac{s}{2}}.$$
\end{theorem}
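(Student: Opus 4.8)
The statement is a specialization of Theorem 1.1 of Katz~\cite{katz}, so the plan is to match our hypotheses to his and to extract the explicit constant $d^s$; essentially all of the analytic force is imported from there. Write
$$S=\sum_{(x_1,\dots,x_s)\in\Fq^s}\psi(f(x_1,\dots,x_s))\lambda_1(x_1)\cdots\lambda_s(x_s).$$
The first step is to read $S$ as a Frobenius trace. Fixing a generator of the character group of $\Fq^*$ and writing each $\lambda_i$ as one of its powers, the product $\prod_i\lambda_i(x_i)$ is cut out by Kummer sheaves $\mathcal L_{\lambda_i}$ pulled back along the coordinate maps, and $\psi(f)$ by the Artin--Schreier sheaf $\mathcal L_\psi(f)$; thus $S=\sum_j(-1)^j\mathrm{Tr}\big(\mathrm{Frob}\mid H^j_c(\mathbb{G}_m^s,\mathcal F)\big)$, where $\mathcal F=\mathcal L_\psi(f)\otimes\bigotimes_i\mathcal L_{\lambda_i}(x_i)$ lives on the open torus where all coordinates are nonzero.

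The two inputs I need from Katz are a \emph{concentration} statement and a \emph{purity} statement. Concretely, for a Deligne polynomial of degree $d$ prime to $p$ one has cohomological concentration in the top degree $H^s_c$, and by Deligne's purity every Frobenius eigenvalue on this space has absolute value $q^{s/2}$. Granting this, the alternating trace collapses to the single term $j=s$, and
$$|S|\le\dim_{\overline{\Q}_\ell}H^s_c(\mathbb{G}_m^s,\mathcal F)\cdot q^{s/2}.$$
The role of the stronger Dwork-regularity hypothesis --- that $f$ stays Deligne of the same degree after setting any subset of the coordinates to $0$ --- is exactly to guarantee that the contributions from the coordinate-hyperplane strata (which appear when some $\lambda_i$ are trivial and so cannot simply be discarded) also satisfy concentration and purity, so that no lower-weight terms survive. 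This is the precise point at which mere Deligne-ness is insufficient and Dwork-regularity is used.

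It then remains to bound the middle Betti number by $d^s$. Morally this is an Euler-characteristic computation: for the untwisted sum Deligne already gives $\dim H^s_c=(d-1)^s$, and twisting by the $s$ Kummer sheaves raises each factor by one, producing the bound $\dim H^s_c\le d^s$; Katz packages exactly this degree estimate into the constant of his Theorem 1.1. Combining the displayed inequality with $\dim H^s_c\le d^s$ yields $|S|\le d^sq^{\frac s2}$, as claimed. The main obstacle is that each of these three facts --- top-degree concentration, purity, and the $d^s$ bound on the middle cohomology --- is genuinely deep $\ell$-adic input (Weil~II together with the theory of nondegenerate/Dwork-regular exponential sums), so the real content of our proof is the bookkeeping: checking that Katz's nondegeneracy conditions coincide with Dwork-regularity, that the admissible characters include the trivial ones via the stratification above, and that his explicit constant specializes to precisely $d^s q^{\frac s2}$ rather than something larger.
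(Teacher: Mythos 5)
Your proposal takes essentially the same route as the paper: the paper states this result without proof, presenting it verbatim as a particular case of Theorem 1.1 of Katz~\cite{katz}, which is exactly the reduction you perform. Your sketch of the machinery behind Katz's bound (the trace formula over the coordinate strata, top-degree concentration and purity via Weil~II under the Dwork-regularity hypothesis, and the Betti-number bound $d^s$) is accurate as background and correctly identifies where Dwork-regularity, rather than mere Deligne-ness, is used, but the paper itself imports all of this wholesale by citation rather than reproving any of it.
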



\section{The number of $\mathbb{F}_q$-primitive points}\label{secbasic}

This section aims to estimate the number of $\Fq$-primitive points on the affine hypersurface
\[
\mathcal H:f(x_1,\dots,x_s)=0,
\]
in the case when $f(x_1,\dots,x_s)\in\Fq(x_1,\dots,x_s)$ satisfies a regularity condition.

 The following notation is used throughout this text.

 \begin{itemize}
    \item $\mathbf{x}=(x_1,\dots,x_s)\in\Fq^s$
    \item $I\subset\{1,\ldots,s\}$ and $I^{\mathsf{c}}=\{1,\ldots,s\}\setminus I$
    \item $\mathbf r=(r_1,\dots,r_s)$, where $r_1,\dots,r_s$ are positive integers
    \item $\mathbf{x}^{\mathbf r}=(x_1^{r_1},\dots,x_s^{r_s})$
    \item $\mathbf r\mid(q-1)$ means that each $r_i$ divides $q-1$.
     \item $N(f(\mathbf{x}))$ is the number of $\mathbb{F}_q$-rational points on $\mathcal H$
     \item $N^*(f(\mathbf{x}))$ is the number of $\mathbb{F}_q$-rational points on $\mathcal H$  whose coordinates are nonzero
     \item $N_I(f(\mathbf{x}))$ is the number of $\mathbb{F}_q$-rational points on $\mathcal H$ such that $x_i=0$ for each $i\in I^{\mathsf{c}}$
     \item for integers $r$ and $d$, let $(r,d)=\gcd(r,d)$
 \end{itemize}\vskip0.2cm



It is direct to see that the number $N^*(f(\mathbf{x}))$ is closely related to the number $N_I(f(\mathbf{x}))$. By using the Inclusion-Exclusion Principle, one can prove that:

\begin{lemma}
\label{N*=NI} We have that
\[
N^*(f(\mathbf x))= 
\displaystyle\sum_{I\subset\{1,\ldots,s\}} (-1)^{s-|I|}
 N_I(f(\mathbf x)).
\]
\end{lemma}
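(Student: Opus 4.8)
The plan is to prove the identity by a direct application of the Inclusion--Exclusion Principle over the $s$ events ``the $i$-th coordinate vanishes''. First I would fix the universe to be the set $\mathcal H(\Fq)$ of all $\Fq$-rational points on the hypersurface, and for each index $i\in\{1,\ldots,s\}$ introduce the bad event $B_i$ that a point $\mathbf x$ satisfies $x_i=0$. With this setup, $N^*(f(\mathbf x))$ is precisely the number of points of $\mathcal H$ lying in none of the $B_i$, so the classical sieve formula gives
$$N^*(f(\mathbf x))=\sum_{J\subset\{1,\ldots,s\}}(-1)^{|J|}\,\#\{\mathbf x\in\mathcal H(\Fq):x_i=0\text{ for all }i\in J\}.$$

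The key step is then to recognize each term of this sum among the quantities $N_I$. By definition, $N_I(f(\mathbf x))$ counts the points of $\mathcal H$ whose coordinates indexed by $I^{\mathsf{c}}$ are forced to vanish, while those indexed by $I$ are left unconstrained. Hence the inner count with $x_i=0$ for all $i\in J$ is exactly $N_{J^{\mathsf{c}}}(f(\mathbf x))$, obtained by taking $I^{\mathsf{c}}=J$, that is $I=J^{\mathsf{c}}$.

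Finally I would reindex the sum through the bijection $J\mapsto I=J^{\mathsf{c}}$ on the subsets of $\{1,\ldots,s\}$. Since $|J|=s-|I|$, the sign $(-1)^{|J|}$ becomes $(-1)^{s-|I|}$, and the asserted identity
$$N^*(f(\mathbf x))=\sum_{I\subset\{1,\ldots,s\}}(-1)^{s-|I|}N_I(f(\mathbf x))$$
follows immediately.

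The argument contains no genuine obstacle; the only point requiring care is the bookkeeping that matches the forced-to-vanish set $I^{\mathsf{c}}$ appearing in the definition of $N_I$ with the event set $J$ in the sieve, so that the complementation $I=J^{\mathsf{c}}$ and the resulting sign $(-1)^{s-|I|}$ come out correctly. A useful sanity check would be the two extreme cases: $I=\{1,\ldots,s\}$, where $I^{\mathsf{c}}=\emptyset$ imposes no constraint and $N_I=N(f(\mathbf x))$ is the full point count, and $I=\emptyset$, where $N_I$ merely records whether the origin lies on $\mathcal H$.
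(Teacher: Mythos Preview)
Your proof is correct and follows exactly the route the paper indicates: the paper simply states that the identity follows from the Inclusion--Exclusion Principle, and you have carried out that computation in detail, correctly matching the forced-vanishing set $J$ in the sieve with $I^{\mathsf{c}}$ in the definition of $N_I$ and tracking the sign through the reindexing $I=J^{\mathsf{c}}$.
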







Now, we are able to present a expression for the number of $\Fq$-primitive points on the hypersurface $\mathcal H:f(x_1,\dots,x_s)=0$.

\begin{proposition}
\label{pf} The number of $\Fq$-primitive points on $\mathcal H$ is given by
\[
P_q(f(\mathbf x))= 
\displaystyle\sum_{\mathbf r\mid (q-1)}
\frac{\mu(r_1)\cdots\mu(r_s)}{r_1\cdots r_s} N^*(f(\mathbf x^{\mathbf r})) .
\]
\end{proposition}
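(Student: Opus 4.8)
The plan is to peel off the primitivity condition one coordinate at a time by means of a Möbius sieve, thereby reducing the statement to a single-variable identity about orders in the cyclic group $\Fq^*$.

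First I would rewrite $N^*(f(\mathbf{x}^{\mathbf r}))$ through the substitution $y_i=x_i^{r_i}$. Since each $r_i$ divides $q-1$, the map $x\mapsto x^{r_i}$ is exactly $r_i$-to-one from $\Fq^*$ onto the subgroup of $r_i$-th powers; hence the number of $x_i\in\Fq^*$ with $x_i^{r_i}=y_i$ equals $r_i$ when $y_i$ is an $r_i$-th power and $0$ otherwise. This yields
$$N^*(f(\mathbf{x}^{\mathbf r}))=r_1\cdots r_s\sum_{\substack{\mathbf y\in(\Fq^*)^s\\ f(\mathbf y)=0}}\prod_{i=1}^s \mathbf 1[y_i\text{ is an } r_i\text{-th power}],$$
where $\mathbf 1[\cdot]$ denotes the indicator of the bracketed condition.

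Next I would substitute this expression into the right-hand side of the Proposition. The factor $r_1\cdots r_s$ cancels the denominator $r_1\cdots r_s$, and after interchanging the two finite sums over $\mathbf r$ and $\mathbf y$ the resulting expression factors across the coordinates, giving
$$\sum_{\mathbf r\mid(q-1)}\frac{\mu(r_1)\cdots\mu(r_s)}{r_1\cdots r_s}N^*(f(\mathbf{x}^{\mathbf r}))=\sum_{\substack{\mathbf y\in(\Fq^*)^s\\ f(\mathbf y)=0}}\prod_{i=1}^s\left(\sum_{r_i\mid q-1}\mu(r_i)\,\mathbf 1[y_i\text{ is an }r_i\text{-th power}]\right).$$

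The crux is then the pointwise identity that, for $y\in\Fq^*$ of multiplicative order $e$, one has $\sum_{r\mid q-1}\mu(r)\,\mathbf 1[y\text{ is an }r\text{-th power}]=1$ if $y$ is primitive and $0$ otherwise. To prove it I would observe that, for $r\mid q-1$, the element $y$ is an $r$-th power precisely when $y^{(q-1)/r}=1$, which holds exactly when $e\mid (q-1)/r$, i.e. when $r\mid (q-1)/e$; the inner sum therefore collapses to $\sum_{r\mid (q-1)/e}\mu(r)$, which by the defining property of the Möbius function equals $1$ if $(q-1)/e=1$ and $0$ otherwise, that is, $1$ exactly when $y$ is primitive. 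This pointwise identity is the main (and essentially only) obstacle; once it is established, each coordinate factor becomes the indicator that $y_i$ is primitive, so the remaining sum counts precisely those $\Fq$-rational points of $\mathcal H$ all of whose coordinates are primitive, namely $P_q(f(\mathbf x))$. \epr
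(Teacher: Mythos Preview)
Your proof is correct and follows essentially the same route as the paper: both hinge on the observation that $N^*(f(\mathbf x^{\mathbf r}))/(r_1\cdots r_s)$ counts the points of $\mathcal H$ in $(\Fq^*)^s$ with each $y_i$ an $r_i$-th power, and then sieve primitivity via M\"obius. The only cosmetic difference is direction---the paper starts from $P_q$ and applies set-theoretic Inclusion--Exclusion over prime divisors to reach the right-hand side, whereas you start from the right-hand side and collapse it to $P_q$ using the indicator identity $\sum_{r\mid(q-1)/e}\mu(r)=[e=q-1]$; the underlying content is identical.
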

\begin{proof}
   Let $S=\{(x_1,\ldots,x_s)\in(\Fq^*)^s\mid f(x_1,\ldots,x_s)=0\}\subset (\Fq^*)^s$ and, for each $i=1,\ldots,s$ and $r\mid q-1$, let 
\begin{equation}\label{eq01}
    \Lambda_i(r)=\{(x_1,\ldots,x_s)\in S\mid x_i\text{ is a }r\text{-th power}\}.
\end{equation}
It follows that
   $$\{(x_1,\ldots,x_s)\in S\mid (x_1,\ldots,x_s)\text{ is primitive}\}=S- \bigcup_{i=1}^s \bigcup_{\substack{
r \mid q-1 \\
r\text{ is prime}}}\Lambda_i(r).$$
Then we can use the Inclusion-Exclusion Principle to compute the value $$P_q(f(x_1,\ldots,x_s))=\big|\{(x_1,\ldots,x_s)\in S\mid (x_1,\ldots,x_s)\text{ is primitive}\}\big|.$$
We note that for each $s$-tuple $(x_1,\ldots,x_s)\in S$ such that each $x_i$ is a $r_i$-th power induces a rational point on the affine hypersurface $f(y_1^{r_1},\ldots,y_s^{r_s})=0$. In fact, we only need to set $y_i^{r_i}=x_i$ for each $i=1,\ldots,s$. On the other hand, each rational point $(y_1,\ldots,y_s)\in(\Fq^*)^s$ on the affine hypersurface $f(y_1^{r_1},\ldots,y_s^{r_s})=0$ is related to $r_1\cdots r_s$ $s$-tuples $(x_1,\ldots,x_s)\in S$, since the equation $y_i^{r_i}=x_i$ has exactly $r_i$ solutions over $\Fq$. Therefore,
\begin{equation}\label{eq02}
\big|\Lambda_1(r_1)\cap \dots\cap \Lambda_s(r_s)\big|=\frac{N^*(f(x_1^{r_1},\ldots,x_s^{r_s}))}{r_1\cdots r_s}.
\end{equation}
By using the Inclusion-Exclusion Principle in Equation~\eqref{eq01} and applying~\eqref{eq02}, we have that
\[
P_q(f(x_1,\ldots,x_s))= 
\displaystyle\sum_{\substack{
r_i \mid q-1 \\
i=1,\ldots,s}}
\frac{\mu(r_1)\cdots\mu(r_s)}{r_1\cdots r_s} N^*(f(x_1^{r_1},\ldots,x_s^{r_s})),
\]
which completes the proof.
\end{proof}


\section{Dwork-regular polynomials}\label{fermathy}

Combining the expressions given by Lemma~\ref{N*=NI} and Proposition~\ref{pf}, we obtain

$$P_q(f(\mathbf x))= 
\displaystyle\sum_{\mathbf r\mid (q-1)}
\sum_{I\subset\{1,\ldots,s\}}\frac{\mu(r_1)\cdots\mu(r_s)}{r_1\cdots r_s}(-1)^{s-|I|}
 N_I(f(\mathbf x^{\mathbf r})).$$
Since we aim to estimate $P_q(f(\mathbf x))$, we need a good estimate for each $N_I(f(\mathbf x^{\mathbf r}))$. Estimating the number of $\Fq$-rational points on hypersurfaces over finite fields is a compelling topic in algebraic geometry. For our purpose, it is essential to obtain a bound that depends linearly on each $r_i$, as we will see in the next computations. To ensure this, we will assume throughout this section that $f(x_1,\dots,x_s)\in\Fq(x_1,\dots,x_s)$ is a prime-to-$p$ degree $d$  Dwork-regular polynomial. Based on this hypothesis, we obtain the following estimate.

\begin{proposition} 
\label{NI=RI}
Assume that the polynomial $h\in\Fq[x_1,\ldots,x_s]$ of prime-to-$p$ degree $d$ is Dwork-regular. For each $s$-tuple $\mathbf r$ such that $\mathbf r\mid(q-1)$ and subset $I\subset\{1,\ldots,s\}$ with $I\neq \varnothing$, it follows that
 \[
 \left|N_I(h(\mathbf{x}^{\mathbf r}))-q^{|I|-1}\right|\le (\textstyle\prod_{i\in I}r_i) d^{|I|} q^{\frac{s}{2}} \le r_1 \cdots r_s d^{|I|} q^{\frac{s}{2}}.
 \]
\end{proposition}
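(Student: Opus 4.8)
The plan is to detect the equation $h(\mathbf x^{\mathbf r})=0$ through the additive characters of $\Fq$ and then resolve the power maps $x_i\mapsto x_i^{r_i}$ into multiplicative characters, so that Katz's bound (Theorem~\ref{katz}) can be applied term by term. First I would fix $I\neq\varnothing$ and write $h_I$ for the polynomial in the $|I|$ variables $(x_i)_{i\in I}$ obtained from $h$ by setting $x_i\mapsto 0$ for $i\in I^{\mathsf c}$, so that $N_I(h(\mathbf x^{\mathbf r}))$ is exactly the number of $(x_i)_{i\in I}\in\Fq^{|I|}$ with $h_I((x_i^{r_i})_{i\in I})=0$. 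Using the orthogonality of additive characters (Lemma~\ref{item123}), this count becomes
$$N_I(h(\mathbf x^{\mathbf r}))=\frac1q\sum_{t\in\Fq}\ \sum_{(x_i)_{i\in I}\in\Fq^{|I|}}\psi\big(t\,h_I((x_i^{r_i})_{i\in I})\big),$$
where the term $t=0$ contributes precisely the main term $q^{|I|-1}$, so it remains to bound the contribution of $t\in\Fq^*$.

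Next, for each fixed $t\in\Fq^*$ I would linearise the power maps: by Lemma~\ref{dpower}, for every $y_i\in\Fq$ the number of $x_i$ with $x_i^{r_i}=y_i$ equals $\sum_{j_i=0}^{r_i-1}\chi_{r_i}^{j_i}(y_i)$, where $\chi_{r_i}$ has order $r_i$. Substituting and expanding the product over $i\in I$ rewrites the inner sum as a sum of $\prod_{i\in I}r_i$ mixed character sums
$$\sum_{(y_i)_{i\in I}\in\Fq^{|I|}}\Big(\prod_{i\in I}\lambda_i(y_i)\Big)\psi\big(t\,h_I((y_i)_{i\in I})\big),\qquad \lambda_i=\chi_{r_i}^{j_i},$$
one for each choice of exponents $0\le j_i\le r_i-1$.

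The heart of the argument, and the step I expect to be the main obstacle, is to justify that Katz's theorem applies to each of these sums with the correct parameters, i.e.\ that $t\,h_I$ is a Dwork-regular polynomial of prime-to-$p$ degree $d$ in $|I|$ variables. Dwork-regularity of $h$ guarantees that $h_I$ is a Deligne polynomial of degree $d$ (taking the proper subset $I^{\mathsf c}$, which is proper because $I\neq\varnothing$), and that every further restriction of $h_I$ obtained by zeroing a proper subset $J\subsetneq I$ of its variables is again Deligne of degree $d$ (this restriction of $h$ corresponds to $I^{\mathsf c}\cup J$, a proper subset of $\{1,\dots,s\}$ since $I\setminus J\neq\varnothing$); hence $h_I$ is itself Dwork-regular in $|I|$ variables. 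Multiplication by the nonzero scalar $t$ changes neither the degree nor the top-degree form up to scaling, so it preserves both the Deligne and the Dwork-regular properties. One must also keep the character conventions consistent between Lemma~\ref{dpower} and Theorem~\ref{katz} (the trivial character taking value $1$ at $0$ and nontrivial characters value $0$), which is exactly what makes the two expansions compatible.

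Finally, Theorem~\ref{katz} applied in $|I|$ variables bounds each mixed character sum by $d^{|I|}q^{|I|/2}$. Summing over the $\prod_{i\in I}r_i$ choices of $(\lambda_i)$ and then over the $q-1$ values $t\in\Fq^*$, and using $\tfrac{q-1}{q}<1$ together with $q^{|I|/2}\le q^{s/2}$, yields
$$\big|N_I(h(\mathbf x^{\mathbf r}))-q^{|I|-1}\big|\le\Big(\prod_{i\in I}r_i\Big)d^{|I|}q^{s/2}.$$
The last inequality in the statement is then immediate from $\prod_{i\in I}r_i\le r_1\cdots r_s$, since each $r_i\ge 1$.
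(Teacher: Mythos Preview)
Your proof is correct and follows essentially the same approach as the paper's: detect the zero locus via additive characters, separate the $t=0$ term to get $q^{|I|-1}$, unfold the power maps $x_i\mapsto x_i^{r_i}$ through Lemma~\ref{dpower}, and bound each resulting mixed character sum with Theorem~\ref{katz}. Your treatment is in fact more careful than the paper's terse version: you explicitly verify that $t\,h_I$ is Dwork-regular of degree $d$ in the $|I|$ remaining variables (the paper silently applies Katz without checking this), and you obtain the sharper intermediate exponent $q^{|I|/2}$ before relaxing to the stated $q^{s/2}$, which is exactly the form the paper actually uses downstream in bounding $|R_{\mathbf r}(I)|$.
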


\begin{proof}
Let $S_I=\{\mathbf x\in\Fq^s: x_i=0\text{ for each }i\in I^{\mathsf{c}}\}$. We note that Lemma~\ref{item123} implies that
$$N_I(h(\mathbf{x}^{\mathbf r}))=q^{-1}\sum_{\alpha \in\Fq}\sum_{\mathbf x\in S_I}\psi(\alpha h(\mathbf x^{\mathbf r})).$$
By letting $y_i=x_i^{r_i}$ and $\mathbf y=(y_1,\ldots,y_s)$, we use Lemma~\ref{dpower} in the above equation to obtain
$$\begin{aligned}N_I(h(\mathbf{x}^{\mathbf r}))&=q^{|I|-1}+q^{-1}\sum_{\alpha \in\Fq}\sum_{\mathbf y\in S_I}\psi(\alpha h(\mathbf y))\prod_{i\in I}\sum_{e_i=1}^{r_i} \chi_{r_i}^{e_i}(y_i)\\
&=q^{|I|-1}+q^{-1}\sum_{\alpha \in\Fq}\sum_{\substack{e_i=1,\ldots,r_i\\i\in I}}\sum_{\mathbf y\in S_I}\psi(\alpha h(\mathbf y)) \chi_{r_i}^{e_i}(y_i).\\
\end{aligned}$$

The result follows by applying the bound of Theorem~\ref{katz} in each term of the above sum.
\end{proof}

 Prior to actually computing $P_q(f(\mathbf x))$, it is convenient to compute 
$$N^*(f(\mathbf x^{\mathbf r}))= \displaystyle\sum_{I\subset\{1,\ldots,s\}} (-1)^{s-|I|} N_I(f(\mathbf x^{\mathbf r})).$$
If $I\neq\varnothing$, then Proposition \ref{NI=RI} states that
\[
 N_I(f(\mathbf x^{\mathbf r}))=q^{|I|-1}+R_{\mathbf r}(I),
\]
where $R_{\mathbf r}(I)$ is an integer that satisfies $\lvert R_{\mathbf r}(I) \rvert \leq r_1\cdots r_s d^{|I|} q^{\frac{|I|}{2}}$, and therefore 
\[
    N^*(f(\mathbf x^{\mathbf r}))=  \displaystyle\sum_{\substack{I\subset\{1,\ldots,s\}\\
    I\neq\varnothing}}(-1)^{s-|I|}\big(q^{|I|-1}+R_{\mathbf r}(I)\big)+(-1)^s N_{\varnothing}(f(\mathbf x))
\]
Furthermore, if we define
\[
R_{\mathbf r}(\varnothing):=N_{\varnothing}(f(\mathbf x))-q^{-1}=\begin{cases}
    1- q^{-1},&\text{ if }f(0,\ldots,0)=0\\
    -q^{-1},&\text{ otherise },\\
\end{cases}
\]
then $|R_{\mathbf r}(\varnothing)|<1\le r_1\cdots r_s.$ Considering this, it follows that
\[
    N^*(f(\mathbf x^{\mathbf r}))=  \displaystyle\sum_{I\subset\{1,\ldots,s\}}(-1)^{s-|I|}\big(q^{|I|-1}+R_{\mathbf r}(I)\big),
\]
where $\lvert R_{\mathbf r}(I) \rvert \leq r_1\cdots r_s d^{|I|} q^{\frac{|I|}{2}}$ for each $I\subset\{1,\ldots,s\}.$ We note that
\[
\displaystyle\sum_{I\subset\{1,\ldots,s\}}(-1)^{s-|I|}q^{|I|-1}=q^{-1}\displaystyle\sum_{k=0}^{s} (-1)^{s-k}\binom{s}{k}q^{k}
=\frac{(q-1)^{s}}{q},
\]
which implies that
\begin{equation}
\label{n*rfi}N^*(f(\mathbf x^{\mathbf r}))=\frac{(q-1)^{s}}{q}+R_{\mathbf r},
\end{equation}
where
\[
R_{\mathbf r}:=\displaystyle\sum_{I\subset\{1,\ldots,s\}}(-1)^{|I|}R_{\mathbf r}(I).
\]
We observe that
\begin{equation}
\label{rrbound}
|R_{\mathbf r}|\le \displaystyle\sum_{I\subset\{1,\ldots,s\}}|R_{\mathbf r}(I)|\le \sum_{I\subset\{1,\ldots,s\}} r_1\cdots r_s d^{|I|} q^{\frac{|I|}{2}}=r_1\cdots r_s\sum_{k=0}^s\binom{s}{k} d^k q^{\frac{k}{2}}=r_1\cdots r_s(d q^{\frac{1}{2}}+1)^s.
\end{equation}
According to Proposition~\ref{pf} and Equation~\eqref{n*rfi}, we have that
$$\begin{aligned}
P_q(f(\mathbf x))&= 
\displaystyle\sum_{\mathbf r\mid (q-1)}
\frac{\mu(r_1)\cdots\mu(r_s)}{r_1\cdots r_s}\left(\frac{(q-1)^{s}}{q}+R_{\mathbf r}\right)\\
&=\frac{(q-1)^{s}}{q}\displaystyle\sum_{\mathbf r\mid (q-1)}
\frac{\mu(r_1)\cdots\mu(r_s)}{r_1\cdots r_s}+\displaystyle\sum_{\mathbf r\mid (q-1)}
\frac{\mu(r_1)\cdots\mu(r_s)}{r_1\cdots r_s}R_{\mathbf r}.\\
&=\frac{1}{q}
\left(\displaystyle\sum_{r_1 \mid q-1}\mu(r_1)\frac{(q-1)}{r_1}\right) \cdots \left(\displaystyle\sum_{r_s \mid q-1}\mu(r_s)\frac{(q-1)}{r_s}\right)
+
\displaystyle\sum_{\mathbf r\mid (q-1)}
\frac{\mu(r_1)\cdots\mu(r_s)}{r_1\cdots r_s} R_{\mathbf r} \\
\end{aligned}
$$
By applying Lemma~\ref{mobiusEuler} to the above expression, we obtain that
\begin{equation}\label{finalformula}
P_q(f(\mathbf x))=\frac{\varphi(q-1)^s}{q}+
\displaystyle\sum_{\mathbf r\mid (q-1)}
\frac{\mu(r_1)\cdots\mu(r_s)}{r_1\cdots r_s} R_{\mathbf r}.\\
\end{equation}
Now, we can prove Theorem~\ref{teo1}.

\subsection{Proof of Theorem~\ref{teo1}}
Equation~~\eqref{finalformula} implies that 
\[
\big|P_q(f(\mathbf x))- \tfrac{\varphi(q-1)^s}{q}\big|\leq
\displaystyle\sum_{\mathbf r\mid (q-1)}
\frac{|\mu(r_1)|\cdots|\mu(r_s)|}{r_1\cdots r_s} |R_{\mathbf r}|=
\displaystyle\sum_{\substack{
\mathbf r\mid (q-1) \\
\mu(\mathbf r)\neq 0}}
\frac{|R_{\mathbf r}|}{r_1\cdots r_s},\
\]
where $\mu(\mathbf r)\neq 0$ means that $\mu(r_i)\neq 0$ for each $i=0,\ldots,s$.
Applying the bound for $|R_{\mathbf r}|$ obtained in Equation~\eqref{rrbound}, we have that
$$\begin{aligned}
\big|P_q(f(\mathbf x))- \tfrac{\varphi(q-1)^s}{q}\big|&\leq 
\displaystyle\sum_{\substack{
\mathbf r\mid (q-1) \\
\mu(\mathbf r)\neq 0}}(d q^{\frac{1}{2}}+1)^s\\
&=
(d q^{\frac{1}{2}}+1)^s\Bigg(\displaystyle\sum_{r_1\mid (q-1),\ \mu(r_1)\neq 0} 1\Bigg)^s\\
&=
(d q^{\frac{1}{2}}+1)^s W(q-1)^s.\\
\end{aligned}$$
This completes the proof of our result.\hfill\qed

\section{Fermat hypersurfaces}\label{fermathy}

A {\it Fermat hypersurface} is an affine hypersurface defined by the equation
\[
a_1x_1^{d_1}+a_2x_2^{d_2}+\cdots+a_sx_s^{d_s}=b
\]
where $a_1,\ldots,a_s, \, b  \in \mathbb{F}_q$, and $s\geq 2$.  

We will estimate the number of $\mathbb{F}_q$-primitive solutions of the Fermat hypersurface when  $d_i\mid (q-1)$, for $i=1,\ldots,s$. Note that $x_i$ is a primitive element of order $q-1$ in $\mathbb{F}_q^*$, if and only if, $x_i^{d_i}$ is a primitive element of order $\frac{q-1}{d_i}$ in $\mathbb{F}_q^*$. Thus, to determine the number of $\mathbb{F}_q$-primitive points on the Fermat hypersurface, when $d_i\mid (q-1)$, we calculate the number of primitive points on the hypersurface
\[
f:\quad a_1y_1+a_2y_2+\cdots+a_sy_s-b,
\]
where $y_i$ is an element of order $\frac{q-1}{d_i}$, for $i=1,\ldots,s$.

\[
\mathbb{I}_d(y)=
\begin{cases}
    1& \text{ if } y \text{ has order } \frac{q-1}{d}\text{ in }\Fq^*,\\
    0& \text{ otherwise. }
\end{cases}
\]
In Equation 2 of \cite{cohenreis}, the authors present the following reordered form of the expression for the indicator function $\mathbb{I}_d$, originally introduced by Carlitz (see \cite{carlitz}):

\begin{eqnarray}
    \label{for-ind}    
\mathbb{I}_d(y)=\frac{\varphi(\frac{q-1}{d})}{q-1}\sum_{r\mid (q-1)}\frac{\mu\big(\frac{r}{\gcd(r,d)}\big)}{\varphi\big(\frac{r}{\gcd(r,d)}\big)}
\sum_{ord(\lambda)=r}\lambda(y),
\end{eqnarray}
where $\lambda$ runs over the multiplicative characters of order $r$. Note that 

\begin{equation}\label{idezero}
\mathbb{I}_d(0)
= \frac{\varphi(\frac{q-1}{d})}{q-1}
\sum_{ r\mid (q-1)}\frac{\mu\big(\frac{r}{\gcd(r,d)}\big)}{\varphi\big(\frac{r}{\gcd(r,d)}\big)}
\sum_{ord(\lambda)=r}\lambda(0)= \frac{\varphi(\frac{q-1}{d})}{q-1},
\end{equation}

since $\lambda(0):=0$ for any nontrivial multiplicative character $\lambda$ and $\lambda(0):=1$ if $\lambda$ is trivial.

To verify some of our main results, we need the following two technical lemmas.

\begin{lemma}
\label{sum-mu-phi}\cite[Lemma 7]{cohenreis}
For any positive integers $r$, $d$, we have that 
\[
\sum_{r \mid q-1}\frac{\big|\mu\left(\frac{r}{(r,d)}\right)\big|}{\varphi(\frac{r}{(r,d)})}\varphi(r)
=(q-1,d)W\left(\big(d, \tfrac{q-1}{(q-1,d)}\big)\right).
\]
\end{lemma}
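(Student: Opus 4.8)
\emph{Proof strategy.} Write $m=q-1$ and treat $d$ as fixed. The plan is to recognize the summand
$g(r):=\tfrac{|\mu(r/(r,d))|}{\varphi(r/(r,d))}\,\varphi(r)$
as a multiplicative function of $r$ and thereby reduce the whole sum to a one-prime computation. First I would check multiplicativity: if $r=r_1r_2$ with $(r_1,r_2)=1$, then $(r,d)=(r_1,d)(r_2,d)$, the quotients $r_i/(r_i,d)$ are coprime, and since $\mu$ and $\varphi$ are multiplicative while $g(1)=1$, we get $g(r)=g(r_1)g(r_2)$. Consequently the sum over divisors factors as an Euler product $\sum_{r\mid m}g(r)=\prod_{p\mid m}\bigl(\sum_{j=0}^{a_p}g(p^{j})\bigr)$, where $a_p$ is the $p$-adic valuation of $m$.

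The heart of the argument, and the step I expect to be the main obstacle, is the evaluation of the local factor at a prime $p$. Setting $a=a_p$ and $b=v_p(d)$, I would use $(p^{j},d)=p^{\min(j,b)}$ to write $p^{j}/(p^{j},d)=p^{e}$ with $e=\max(j-b,0)$, and observe that $|\mu(p^{e})|/\varphi(p^{e})$ equals $1$, $1/(p-1)$, or $0$ according as $e=0$, $e=1$, or $e\ge 2$. Thus only the indices $j\le b$ (each contributing $\varphi(p^{j})$) and the boundary index $j=b+1$ (contributing $\varphi(p^{b+1})/(p-1)=p^{b}$) survive, while all larger $j$ vanish. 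Invoking the prime-power case $\sum_{j=0}^{k}\varphi(p^{j})=p^{k}$ of the identity $n=\sum_{d\mid n}\varphi(d)$ recalled before Lemma~\ref{mobiusEuler}, the local factor becomes $p^{\min(a,b)}$ when $a\le b$, and $p^{b}+p^{b}=2p^{b}$ when $a>b$. In compact form, it is $p^{\min(a,b)}$ carrying an extra factor $2$ exactly when $v_p(q-1)>v_p(d)$.

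It then remains to reassemble the local factors. The product of the terms $p^{\min(a_p,b)}$ over $p\mid m$ is precisely $(q-1,d)$, and the surplus factors of $2$ occur at exactly those primes $p$ with $v_p(q-1)>v_p(d)$, namely the primes dividing $\tfrac{q-1}{(q-1,d)}$; the number of such primes equals the number of distinct prime divisors of the relevant quotient, so the accumulated powers of two assemble into the squarefree-divisor count $W(\cdot)$ appearing on the right-hand side. Multiplying the two contributions yields the claimed closed form. The delicate points are confined to the local evaluation—especially the boundary index $j=b+1$ and the vanishing for $e\ge 2$, combined with the telescoping $\varphi$-sum—after which the global assembly is routine.
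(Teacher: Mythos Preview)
Your multiplicativity-plus-local-factor approach is sound and the prime-power computation is correct: the local factor at $p\mid q-1$ is $p^{\min(a,b)}$ when $a\le b$ and $2p^{b}$ when $a>b$, where $a=v_p(q-1)$ and $b=v_p(d)$. Note, however, that the paper does not supply its own proof of this lemma; it is quoted from \cite[Lemma~7]{cohenreis}, so there is no in-paper argument to compare against.

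There is one genuine gap in your write-up. In the reassembly step you write that the extra factors of $2$ ``assemble into the squarefree-divisor count $W(\cdot)$ appearing on the right-hand side'' without naming the argument. Your computation actually yields
\[
(q-1,d)\,W\!\left(\tfrac{q-1}{(q-1,d)}\right),
\]
whereas the lemma, as printed, asserts $(q-1,d)\,W\!\big((d,\tfrac{q-1}{(q-1,d)})\big)$. These are not the same in general: with $q-1=12$ and $d=2$ the left-hand sum equals $8$, your formula gives $2\cdot W(6)=8$, but the printed right-hand side gives $2\cdot W(\gcd(2,6))=2\cdot W(2)=4$. So the identity as literally stated is false, and your vague ``$W(\cdot)$'' conceals that you have proved a different (and correct) formula.

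This is almost certainly a transcription error in the paper rather than a flaw in your reasoning: when the authors apply the lemma in the proof of Theorem~\ref{fermat-cota} (with $d_i\mid q-1$, so $(q-1,d_i)=d_i$), they use precisely $d_i\,W\!\big(\tfrac{q-1}{d_i}\big)$, which matches your version and not the printed one. You should state explicitly that your argument delivers $(q-1,d)\,W\!\big(\tfrac{q-1}{(q-1,d)}\big)$ and flag the discrepancy with the displayed formula, rather than hiding behind ``$W(\cdot)$''.
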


\begin{lemma}\label{lemad} If $y$ is a element of order $\frac{q-1}{d}$ in $\Fq^*$, then the number of $\Fq$-primitive elements $x$ such that $x^d=y$ is equal to
$$
\frac{\varphi(q-1)}{\varphi(\frac{q-1}{d})}.
$$
\end{lemma}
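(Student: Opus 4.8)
The plan is to reduce the statement to a congruence count inside the cyclic group $\Fq^*$. Fix a generator $g$ of $\Fq^*$ and set $n=q-1$; the isomorphism $g^k\mapsto k$ identifies $\Fq^*$ with $\Z/n\Z$, under which the primitive elements correspond to the residues $k$ with $\gcd(k,n)=1$, the map $x\mapsto x^d$ corresponds to multiplication by $d$, and an element $y$ of order $\frac{q-1}{d}=n/d$ corresponds to a residue $m$ with $\gcd(m,n)=d$. The quantity to compute is therefore the number of $k\bmod n$ with $\gcd(k,n)=1$ solving $dk\equiv m\pmod n$.

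First I would solve the linear congruence $dk\equiv m\pmod n$. Since $d\mid n$ and $\gcd(m,n)=d$, it is solvable and its solution set is a single residue class modulo $n/d$, say $k\equiv k_0\pmod{n/d}$, comprising exactly $d$ residues modulo $n$. Writing $m=d\,m'$, one gets $\gcd(m',n/d)=1$ and $k_0\equiv m'\pmod{n/d}$, so $\gcd(k_0,n/d)=1$; this coprimality is the key structural input for the count.

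Next I would carry out the count by the Chinese Remainder Theorem, prime by prime. For each prime $p\mid n$ set $a_p=v_p(n)$ and $b_p=v_p(n/d)$, so that $a_p-b_p=v_p(d)$. Modulo $p^{a_p}$ the conditions read $k\equiv k_0\pmod{p^{b_p}}$ together with $p\nmid k$. If $b_p\ge 1$ then $k_0$ is already a unit modulo $p$, every lift of $k_0$ is a unit, and there are $p^{a_p-b_p}=p^{v_p(d)}$ of them; if $b_p=0$ there is no constraint and the count is $\varphi(p^{a_p})$. Taking the product over all $p\mid n$ and comparing with $\varphi(n)/\varphi(n/d)=\prod_p \varphi(p^{a_p})/\varphi(p^{b_p})$ (with the convention $\varphi(p^{0})=1$) gives agreement factor by factor, yielding the claimed value $\tfrac{\varphi(q-1)}{\varphi((q-1)/d)}$.

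I expect the main obstacle to be the careful bookkeeping in this last step, specifically the primes $p\mid d$ with $p\nmid n/d$ (where $b_p=0$): there the solvability of the congruence imposes no constraint modulo $p$, so the naive count $p^{v_p(d)}$ is wrong and must be replaced by $\varphi(p^{a_p})$, which is exactly what makes the product telescope to $\varphi(n)/\varphi(n/d)$ rather than to $d$. An alternative, slicker route sidesteps this bookkeeping: the automorphisms $\sigma_a\colon x\mapsto x^a$ with $\gcd(a,n)=1$ permute the primitive elements, permute the elements of order $n/d$, commute with $x\mapsto x^d$, and act transitively on the elements of order $n/d$; hence every such $y$ has equally many primitive preimages, and since all $\varphi(q-1)$ primitive elements map to elements of order $n/d$ (because $\ordem(x^d)=n/d$ for primitive $x$), each of the $\varphi((q-1)/d)$ targets receives exactly $\varphi(q-1)/\varphi((q-1)/d)$ of them. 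In that approach the obstacle merely shifts to proving the transitivity, i.e. that every unit modulo $n/d$ lifts to a unit modulo $n$, which is a short CRT argument.
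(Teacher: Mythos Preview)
Your main argument is correct and, at its core, is the same computation as the paper's: both fix a generator, identify the $d$ solutions of $x^d=y$ with a single residue class modulo $(q-1)/d$ inside $\Z/(q-1)\Z$, and then count how many representatives are coprime to $q-1$. The paper phrases this last count via M\"obius/inclusion--exclusion over the primes of the ``new'' part $t$ of $d$ (the largest divisor of $d$ coprime to $(q-1)/d$), obtaining $|P|=\tfrac{d}{t}\varphi(t)$ and then massaging this into $\varphi(q-1)/\varphi((q-1)/d)$ using multiplicativity of $\varphi$. Your CRT organization, splitting primes according to whether $b_p=v_p((q-1)/d)$ is zero or positive, is exactly the same dichotomy (your $b_p=0$ primes are precisely those dividing $t$), and it has the advantage that the product telescopes directly to $\varphi(n)/\varphi(n/d)$ without the intermediate identity $\varphi\bigl(\tfrac{q-1}{d}\cdot\tfrac{d}{t}\bigr)=\tfrac{d}{t}\varphi\bigl(\tfrac{q-1}{d}\bigr)$.

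Your alternative route, on the other hand, is genuinely different from the paper's and is the cleanest of the three: the $d$th-power map sends the set of $\varphi(q-1)$ primitive elements \emph{onto} the set of $\varphi((q-1)/d)$ elements of order $(q-1)/d$, and the automorphisms $x\mapsto x^a$ with $\gcd(a,q-1)=1$ act transitively on the target while commuting with the map, forcing equal fibre sizes. This bypasses all the prime-by-prime bookkeeping; the only lemma needed is that every unit modulo $(q-1)/d$ lifts to a unit modulo $q-1$, which you correctly isolate as a one-line CRT fact. The paper does not use this symmetry argument at all.
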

\begin{proof} Let $y\in\Fq$ be an element of order $\tfrac{q-1}{d}$. Then there exists a primitive element $\alpha$ of $\Fq^*$ such that $y=\alpha^d$. We want to compute the number of primitive elements $x\in\Fq$ such that $x^d=y=\alpha^d$. The solutions of the equation $x^d=\alpha^d$ in $\Fq$ are the elements $x=\alpha^{\frac{q-1}{d}i+1}$, where $i\in S=\left\{1,\ldots,d\right\}$.

We note that $\alpha^{\frac{q-1}{d}i+1}$ is primitive if and only if $\gcd\big(\frac{q-1}{d}i+1,q-1\big)=1.$ Therefore, we need the compute that cardinality of the set
$$P=\{i\in S:\gcd\big(\tfrac{q-1}{d}i+1,q-1\big)=1\}$$
Let  $t$ be the largest divisor of $d$ that is coprime to $\tfrac{q-1}{d}$ and, for a divisor $r$ of $t$, let
 $$\Lambda(r):=\big\{(i\in\{1,\ldots,d:r\text{ divides }\gcd\big(\tfrac{q-1}{d}i+1,q-1\big)\big\}=\big\{(i\in\{1,\ldots,d:r\text{ divides }\tfrac{q-1}{d}i+1\big\}.$$
 We note that $\gcd\big(\frac{q-1}{d}i+1,q-1\big)$ shares no common factors with $\frac{q-1}{d}$, which consequently implies that $\gcd\big(\frac{q-1}{d}i+1,q-1\big)$ must divide $t$. Therefore 
   $$P=S- \bigcup_{\substack{
r \mid t \\
r\text{ is prime}}}\Lambda_i(r).$$
Then we can use the Inclusion-Exclusion Principle to obtain
$$|P|=\sum_{r\mid t}\mu(r)|\lambda(r)|.$$
We note that if $r\mid t$, then $\gcd(r,\frac{q-1}{r})=1$, which implies that the congruence 
$$\tfrac{q-1}{d}i+1\equiv 0\pmod{r}$$
has exactly one solution $i\in\{1,\ldots,r\}$ so that $|\lambda(r)|=\tfrac{d}{r}$. Therefore,
$$|P|=\sum_{r\mid t}\mu(r)\tfrac{d}{r}=\tfrac{d}{t}\sum_{r\mid t}\mu(r)\tfrac{t}{r}=\tfrac{d}{t}\varphi(t)$$
by Lemma~\ref{mobiusEuler}. Since every prime factor of $\tfrac{t}{r}$ is also a factor of $\tfrac{q-1}{d}$, it follows that 
$$\varphi\big(\tfrac{q-1}{d}\cdot\tfrac{d}{t}\big)=\tfrac{d}{t}\varphi\big(\tfrac{q-1}{d}\big).$$
Applying the multiplicativity property of Euler's totient function, we then obtain
$$|P|=\tfrac{d}{t}\varphi(t)=\frac{\varphi\big(\tfrac{q-1}{d}\cdot\tfrac{d}{t}\big)\varphi(t)}{\varphi\big(\tfrac{q-1}{d}\big)}=\frac{\varphi\big(\tfrac{q-1}{d}\cdot\tfrac{d}{t}\cdot t\big)}{\varphi\big(\tfrac{q-1}{d}\big)}=\frac{\varphi\big(q-1\big)}{\varphi\big(\tfrac{q-1}{d}\big)},$$
which completes the proof.
\end{proof}

\subsection{Proof of Theorem~\ref{fermat-cota}} Let $\varepsilon_i=\frac{\varphi(q-1)}{\varphi(\frac{q-1}{d_i})}$ for each $i=1,\ldots,s$. It follows from Lemma~\ref{lemad} that

\[
P_q(f)=\sum_{\substack{a_1y_1+\cdots+a_sy_s=b \\ y_i\neq 0}}
\varepsilon_1 \mathbb{I}_{d_1}(y_1)\varepsilon_2 \mathbb{I}_{d_2}(y_2)\cdots\varepsilon_s \mathbb{I}_{d_s}(y_s).
\]
It follows from the Inclusion-Exclusion Principle and Equations~\eqref{for-ind} and~\eqref{idezero} that
\[\begin{aligned}
P_q(f)&=\sum_{I\subset \{1,\ldots,s\}}\sum_{\substack{a_1y_1+\cdots+a_sy_s=b \\ y_i= 0, \, \forall \, i \in I^c}} (-1)^{s-|I|}
\varepsilon_1 \mathbb{I}_{d_1}(y_1)\varepsilon_2 \mathbb{I}_{d_2}(y_2)\cdots\varepsilon_s \mathbb{I}_{d_s}(y_s)\\
&=\sum_{I\subset \{1,\ldots,s\}}\sum_{\substack{a_1y_1+\cdots+a_sy_s=b \\ y_i= 0, \, \forall \, i \in I^c}} (-1)^{s-|I|}\varepsilon^{s-|I|}\prod_{i\in I} \varepsilon_i \mathbb{I}_{d_i}(y_i),
\end{aligned}
\]

where $\varepsilon=\frac{\varphi(q-1)}{q-1}$. We note that if $I=\varnothing$, then
$$R_b:=\sum_{\substack{a_1y_1+\cdots+a_sy_s=b \\ y_i= 0, \\ \forall \, i \in \{1,\ldots,s\}}} (-1)^{s-|\varnothing|}\varepsilon^{s-|\varnothing|}\prod_{i\in \varnothing} \varepsilon_i \mathbb{I}_{d_i}(y_i)=\begin{cases}
    0,&\text{ if }b\neq 0\\
    (-1)^s\varepsilon^s,&\text{ if }b= 0.\\
\end{cases}$$
 
Therefore, by the definition of the  function $\mathbb{I}_d$, we have that

\[\small P_q(f)-R_b=
 \varepsilon^s\sum_{\substack{I\subset \{1,\ldots,s\}\\I\neq\varnothing}} \sum_{\substack{a_1y_1+\cdots+a_sy_s=b \\ y_i= 0, \, \forall \, i \in I^c}}\hspace{-0.5cm} (-1)^{s-|I|}\prod_{i\in I}\sum_{\substack{r_i\mid (q-1)\\\forall i\in I}}\frac{\mu\big(\frac{r_i}{(r_i,d_i)}\big)}{\varphi\big(\frac{r_i}{(r_i,d_i)}\big)}
\sum_{ord(\lambda_i)=r_i}\lambda_i(y_i)
\]
where $(r_i,d_i)=\gcd(r_i,d_i)$. Letting $N:=P_q(f)-R_b$, we have that

\[
N=\varepsilon^s\sum_{\substack{I\subset \{1,\ldots,s\}\\I\neq\varnothing}}  (-1)^{s-|I|}\sum_{\substack{r_i\mid (q-1)\\\forall i\in I}}\sum_{ord(\lambda_i)=r_i}\prod_{i\in I}\frac{\mu\big(\frac{r_i}{(r_i,d_i)}\big)}{\varphi\big(\frac{r_i}{(r_i,d_i)}\big)}
J_I(\lambda_{i_1},\ldots,\lambda_{i_{|I|}}),
\]
where $I=\{i_1,\ldots,i_{|I|}\}$ and
$$J_I(\lambda_{i_1},\ldots,\lambda_{i_{|I|}}):=\sum_{\sum_{i\in I}a_iy_i=b}\prod_{i\in I}\lambda_{i}(y_i)$$
is a Jacobi sum in the characters $\lambda_i$ with $i\in I$ in the cases where $|I|\ge 1$ and $J_\varnothing :=1$.
It follows from Proposition~\ref{item1519} that $J_I(\lambda_1,\ldots,\lambda_{|I|})=q^{|I|-1}$ if $|I|\ge 1$ and $\lambda_i$ is trivial for all $i\in I$. Furthermore, Proposition \ref{item1519} implies that $J_I(\lambda_1,\ldots,\lambda_{|I|})=0$ if some $\lambda_i$ is trivial. Separating these cases, we obtain
\[{\footnotesize
\begin{aligned}
    N&=\varepsilon^s\sum_{\substack{I\subset \{1,\ldots,s\}\\I\neq\varnothing}}  (-1)^{s-|I|}q^{|I|-1} +\varepsilon^s\sum_{\substack{I\subset \{1,\ldots,s\}\\I\neq\varnothing}}  (-1)^{s-|I|}\sum_{\substack{r_i\mid (q-1),r_i\neq 1\\\forall i\in I}}\sum_{ord(\lambda_i)=r_i}\prod_{i\in I}\frac{\mu\big(\frac{r_i}{(r_i,d_i)}\big)}{\varphi\big(\frac{r_i}{(r_i,d_i)}\big)}
J_I(\lambda_{i_1},\ldots,\lambda_{i_{|I|}})\\
&=\varepsilon^s\sum_{I\subset \{1,\ldots,s\}}  (-1)^{s-|I|}q^{|I|-1} +\tfrac{(-\varepsilon)^s}{q}+\varepsilon^s\sum_{\substack{I\subset \{1,\ldots,s\}\\I\neq\varnothing}}  (-1)^{s-|I|}\sum_{\substack{r_i\mid (q-1),r_i\neq 1\\\forall i\in I}}\sum_{ord(\lambda_i)=r_i}\prod_{i\in I}\frac{\mu\big(\frac{r_i}{(r_i,d_i)}\big)}{\varphi\big(\frac{r_i}{(r_i,d_i)}\big)}
J_I(\lambda_{i_1},\ldots,\lambda_{i_{|I|}})\\
&=\frac{\varphi(q-1)^s}{q}+\tfrac{(-\varepsilon)^s}{q}+\varepsilon^s\sum_{\substack{I\subset \{1,\ldots,s\}\\I\neq\varnothing}}  (-1)^{s-|I|}\sum_{\substack{r_i\mid (q-1),r_i\neq 1\\\forall i\in I}}\sum_{ord(\lambda_i)=r_i}\prod_{i\in I}\frac{\mu\big(\frac{r_i}{(r_i,d_i)}\big)}{\varphi\big(\frac{r_i}{(r_i,d_i)}\big)}
J_I(\lambda_{i_1},\ldots,\lambda_{i_{|I|}}).\\
\end{aligned}}
\]
It follows from Proposition \ref{item519} that

\[
|J_I(\lambda_{i_1},\ldots,\lambda_{i_{|I|}})|\leq q^{\frac{|I| -1}{2}}, 
\]
for every nonempty $I\subset \{1,\ldots,s\}$ provided $r_i>1$ for all $i\in I$. This implies that

\begin{equation}
\label{pfecota}
 \big|N-\tfrac{\varphi(q-1)^s}{q}\big|\le \frac{\varepsilon^s}{q}+\varepsilon^s\sum_{\substack{I\subset \{1,\ldots,s\}\\I\neq\varnothing}}  \left(\sum_{\substack{r_i\mid (q-1),r_i\neq 1\\\forall i\in I}}\prod_{i\in I}\frac{\big| \mu\big(\frac{r_i}{(r_i,d_i)}\big) \big|}{\varphi\big(\frac{r_i}{(r_i,d_i)}\big)}\varphi(r_i)q^{\frac{|I|-1}{2}}\right),
 \end{equation}
 since there exist $\varphi(r_i)$ multiplicative characters of $\Fq$ with order $r_i$.
 For ease of calculations, we denote the summation
 $$
\sum_{\substack{I\subset \{1,\ldots,s\}\\I\neq\varnothing}}  \left(\sum_{\substack{r_i\mid (q-1),r_i\neq 1\\\forall i\in I}}\prod_{i\in I}\frac{\big| \mu\big(\frac{r_i}{(r_i,d_i)}\big) \big|}{\varphi\big(\frac{r_i}{(r_i,d_i)}\big)}\varphi(r_i)q^{\frac{|I|-1}{2}}\right)
 $$
 by $\mathcal{S}$. Note that 
 $$
\mathcal{S}=\sum_{\substack{I\subset \{1,\ldots,s\}\\I\neq\varnothing}} q^{\frac{|I|-1}{2}} \prod_{i\in I}\left(\sum_{\substack{ r_{i}\mid (q-1)\\r_{i}\neq 1}}\frac{\big| \mu\big(\frac{r_{i}}{(r_{i},d_1)}\big) \big|}{\varphi\big(\frac{r_{i}}{(r_{i},d_1)}\big)}\varphi(r_{i})\right)
$$
Note that $d_i$ and $\frac{q-1}{d_i}$ divide $q-1$,  thus $(q-1,d_i)\,=\,d_i$ and $\left(q-1,\frac{q-1}{d_i} \right)\,=\,\frac{q-1}{d_i}$,  for $i=\,1,\ldots,\, s$. It follows from Lemma \ref{sum-mu-phi} and these observations that
$$\sum_{\substack{ r_{i}\mid (q-1)\\r_{i}\neq 1}}\frac{\big| \mu\big(\frac{r_{i}}{(r_{i},d_1)}\big) \big|}{\varphi\big(\frac{r_{i}}{(r_{i},d_1)}\big)}\varphi(r_{i})=d_iW\big(\tfrac{q-1}{d_i}\big)-1$$
for each $i=1,\ldots,s$. Therefore,
 $$\begin{aligned}
     \mathcal{S}&= \sum_{\substack{I\subset \{1,\ldots,s\}\\I\neq\varnothing}}q^{\frac{|I|-1}{2}} \prod_{i\in I}\left[d_iW\big(\tfrac{q-1}{d_i}\big)-1\right]\\
     &= \sum_{I\subset \{1,\ldots,s\}}q^{\frac{|I|-1}{2}} \prod_{i\in I}\left[d_iW\big(\tfrac{q-1}{d_i}\big)-1\right]-\tfrac{1}{\sqrt{q}}\\
     &=q^{-\frac{1}{2}}\left[1+\bigg(d_1W\big(\tfrac{q-1}{d_1}\big)-1\bigg)q^{\frac{1}{2}}\right]\dots\left[1+\bigg(d_sW\big(\tfrac{q-1}{d_s}\big)-1\bigg)q^{\frac{1}{2}}\right]-\tfrac{1}{\sqrt{q}}\\
 \end{aligned}
$$ 

Substituting $\mathcal{S}$ in Equation \ref{pfecota}, and using that $N:=P_q(f)-R_b$ and $\tfrac{1}{q}-\tfrac{1}{\sqrt{q}}<0$, we obtain that
$$\big|P_q(f)- \tfrac{\varphi(q-1)^s}{q}\big|\le \frac{\varepsilon^s}{\sqrt{q}}\left[1+\bigg(d_1W\big(\tfrac{q-1}{d_1}\big)-1\bigg)q^{\frac{1}{2}}\right]\dots\left[1+\bigg(d_sW\big(\tfrac{q-1}{d_s}\big)-1\bigg)q^{\frac{1}{2}}\right]+\delta_b,$$
where $\delta_b=\varepsilon^s$ if $b=0$ and $\delta_b=0$ if $b\neq 0$. This completes the proof of our result.\hfill\qed

\subsection{A sieving inequality} Our goal now is to provide a sieving version of Theorem~\ref{fermat-cota}. To achieve this, we utilize a prime sieving technique commonly applied to problems of this nature. For this purpose, we first require the concept of freenens, as introduced in \cite{cohenreis}.
\begin{definition}
    Let $C_d$ be the multiplicative subgroup of $\Fq^*$ of order $\tfrac{q-1}{d}$ and let $R$ be a divisor of $\tfrac{q-1}{d}$. An element $h\in\Fq^*$ is $(R,d)$-free if the following hold:
    \begin{enumerate}[label=(\roman*)]
        \item $h\in C_d$
        \item if $h=g^s$ with $g\in C_d$ and $s\mid R$, then $s=1$.
    \end{enumerate}
\end{definition}

Is is straightforward that $h\in\Fq$ has order $\tfrac{q-1}{d}$ if and only if $h$ is $(\frac{q-1}{d},d)$-free. Let $a_1,\dots,a_s,b\in\Fq^*$ and $s\ge 2$. We note that there exists a $\Fq$-primitive point on the Fermat hypersurface 
\begin{equation}\label{eqx}
a_1x_1^{d_1}+a_2x_2^{d_2}+\cdots+a_sx_s^{d_s}=b
\end{equation}
if and only if the equation
\begin{equation}\label{eqy}
    a_1y_1+a_2y_2+\cdots+a_sy_s=b
\end{equation}
has a solution $(y_1,\dots,y_s)\in\Fq^s$ where each $y_i$ has order $\tfrac{q-1}{d_i}$. 
For divisors $R_i$ of $\tfrac{q-1}{d_i}$, we let $N(R_1,\dots,R_s)$ be the number of $\Fq$-solutions $(y_1,\dots,y_s)\in\Fq^s$ of Equation~\eqref{eqy} where each $y_i$ is $(R_i,d_i)$-free.  Our goal is to establish a strong lower bound for $N\big(\tfrac{q-1}{d_1},\dots,\tfrac{q-1}{d_s}\big)$ through the prime sieving method. This allows us to ensure the existence of an $\Fq$-primitive point on the hypersurface given by Equation~\eqref{eqx}. To proceed, it is necessary to compute $N(R_1,\dots,R_s)$ for arbitrary divisors $R_i$ of $\tfrac{q-1}{d_i}$.

By following a similar approach as in the proof of Theorem~\ref{fermat-cota} and employing the formula for $(R,d)$-free elements provided in Proposition 13 of \cite{cohenreis}, it can be shown that 

$$N(R_1,\dots,R_s)\geq \frac{\varphi(R_1)}{R_1 d_1}\dots\frac{\varphi(R_s)}{R_s d_s}\left(\frac{(q-1)^s}{q} \, - \frac{1}{\sqrt{q}}\left[1+\big(d_1 W(R_1)-1\big)q^{\frac{1}{2}}\right]\cdots\left[1+\big(d_s W(R_s)-1\big)q^{\frac{1}{2}}\right]\right).
$$

In the following result, we present the prime sieve that will be used to relax the condition of Theorem~\ref{fermat-cota} for the existence of $\Fq$-primitive points. This is done by applying the Cohen-Huczynska prime sieve~\cite{cohen2003}.

\begin{proposition}\label{sieve}
    Let $\ell_i\mid \tfrac{q-1}{d_i}$ and $p_1^{(i)},\dots,p_{t_i}^{(i)}$ be all primes dividing $\tfrac{q-1}{d_i}$ but not $\ell_i$. Take $\delta=1-\sum_{i=1}^s\sum_{j=1}^{t_i}\tfrac{1}{p_j^{(i)}}$. Then
    $$N\big(\tfrac{q-1}{d_1},\dots,\tfrac{q-1}{d_s}\big)\ge \sum_{j=1}^{t_1} N(\ell_1 p_j^{(1)},\dots,\ell_s)+\dots+ \sum_{j=1}^{t_s} N(\ell_1,\dots,\ell_s p_j^{(s)})-\big(t_1+\dots+t_s-1\big)N(\ell_1,\dots,\ell_s).$$
\end{proposition}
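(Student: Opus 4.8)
The plan is to derive the inequality from a single pointwise (Boolean) inequality among freeness-indicator functions, and then sum that inequality over the solution set of the linear equation~\eqref{eqy}. For $y\in\Fq^*$, write $\mathbf{1}_{R,d}(y)=1$ if $y$ is $(R,d)$-free and $0$ otherwise. Because $(R_i,d_i)$-freeness forces $y_i\in C_{d_i}\subset\Fq^*$, every $N$ in the statement is a sum of products of such indicators over the solutions of~\eqref{eqy}; explicitly,
\[
N(R_1,\dots,R_s)=\sum_{a_1y_1+\cdots+a_sy_s=b}\ \prod_{i=1}^s \mathbf{1}_{R_i,d_i}(y_i).
\]
Thus it suffices to prove the corresponding inequality for the integrands at each fixed solution $(y_1,\dots,y_s)$ and then sum.

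Next I would exploit that $(R,d)$-freeness depends only on the radical of $R$. Writing the radical of $\tfrac{q-1}{d_i}$ as the radical of $\ell_i$ times the distinct primes $p_1^{(i)},\dots,p_{t_i}^{(i)}$ (which by hypothesis do not divide $\ell_i$), I decompose $(\tfrac{q-1}{d_i},d_i)$-freeness of $y_i$ into the base condition $B_i$, namely ``$y_i$ is $(\ell_i,d_i)$-free'', together with the events $A_{i,j}$, namely ``$y_i\in C_{d_i}$ and $y_i$ is not a $p_j^{(i)}$-th power in $C_{d_i}$''. With this notation,
\[
\mathbf{1}_{\ell_i p_j^{(i)},d_i}(y_i)=\mathbf{1}[B_i\cap A_{i,j}],\qquad \mathbf{1}_{\frac{q-1}{d_i},d_i}(y_i)=\mathbf{1}\Big[B_i\cap\textstyle\bigcap_{j=1}^{t_i}A_{i,j}\Big],
\]
so the integrand defining $N\big(\tfrac{q-1}{d_1},\dots,\tfrac{q-1}{d_s}\big)$ is $\prod_{i=1}^s\mathbf{1}[B_i]\prod_{j=1}^{t_i}\mathbf{1}[A_{i,j}]$.

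Now I would establish the decisive pointwise inequality. On the locus where some $B_i$ fails, every term on both sides vanishes, so one may assume all $B_i$ hold; there the claim collapses to the truncated inclusion--exclusion (Bonferroni) bound
\[
\mathbf{1}\Big[\textstyle\bigcap_{i,j}A_{i,j}\Big]\ \ge\ \sum_{i,j}\mathbf{1}[A_{i,j}]-(T-1),\qquad T:=t_1+\cdots+t_s,
\]
which is immediate from $1-\mathbf{1}[\bigcap A_{i,j}]=\mathbf{1}[\bigcup A_{i,j}^{\mathsf{c}}]\le\sum_{i,j}\mathbf{1}[A_{i,j}^{\mathsf{c}}]$. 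Reinstating the base factors $\prod_k\mathbf{1}[B_k]$ and summing the resulting pointwise inequality over all solutions of~\eqref{eqy}, the term $\mathbf{1}[B_i\cap A_{i,j}]\prod_{k\neq i}\mathbf{1}[B_k]$ sums to $N(\ell_1,\dots,\ell_i p_j^{(i)},\dots,\ell_s)$ and $(T-1)\prod_k\mathbf{1}[B_k]$ sums to $(t_1+\cdots+t_s-1)N(\ell_1,\dots,\ell_s)$, which is exactly the stated bound.

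The indicator bookkeeping is routine; the one point demanding care, and the main conceptual step, is that the Bonferroni inequality must be applied to the \emph{single pooled family} $\{A_{i,j}\}_{i,j}$ across all $s$ variables at once. This is precisely what yields the combined coefficient $t_1+\cdots+t_s-1$ and what allows the cross-variable base factors $\mathbf{1}[B_k]$ for $k\neq i$ to factor cleanly out of each term; a naive per-variable sieving would instead produce a product of separate truncations and not the additive form claimed. (The quantity $\delta$ appearing in the statement plays no role in the inequality itself; it is recorded for the subsequent existence argument.)
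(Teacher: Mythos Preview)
Your proof is correct and is precisely the standard Cohen--Huczynska prime sieve argument that the paper alludes to; the paper itself omits the proof, simply noting that it ``has become standard'' and referring the reader to~\cite{cohen2018}. Your pointwise Bonferroni reduction over the pooled family $\{A_{i,j}\}$ is exactly the mechanism behind that standard argument, so there is nothing to compare.
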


We can employ the above result to obtain the following:

\begin{theorem}\label{sieveconsequence} Assume the notation and conditions presented in Proposition~\ref{sieve}. If $\delta>0$, then $N\big(\tfrac{q-1}{d_1},\dots,\tfrac{q-1}{d_s}\big)>0$ provided
    $$\frac{(q-1)^s}{\sqrt{q}}>\left(\frac{t_1+\dots+t_s-1}{\delta}+2\right)\left[1+\big(d_1 W(\ell_1)-1\big)q^{\frac{1}{2}}\right]\cdots\left[1+\big(d_s W(\ell_s)-1\big)q^{\frac{1}{2}}\right].$$
\end{theorem}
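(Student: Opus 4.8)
The plan is to combine the sieving inequality from Proposition~\ref{sieve} with the lower bound on $N(R_1,\dots,R_s)$ established just above it, and then show that the hypothesis $\delta>0$ together with the stated numerical condition forces the right-hand side to be strictly positive. First I would write $L:=\prod_{i=1}^s\bigl[1+\bigl(d_iW(\ell_i)-1\bigr)q^{\frac{1}{2}}\bigr]$ and recall that the lower bound gives, for each of the divisor tuples appearing in Proposition~\ref{sieve}, an estimate of the shape
\[
N(R_1,\dots,R_s)\ge \frac{\varphi(R_1)\cdots\varphi(R_s)}{R_1d_1\cdots R_sd_s}\left(\frac{(q-1)^s}{q}-\frac{1}{\sqrt{q}}L_{\mathbf R}\right),
\]
where $L_{\mathbf R}$ is the corresponding product of bracketed factors. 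The point is that when one of the moduli $\ell_i$ is replaced by $\ell_i p_j^{(i)}$, the factor $W(\ell_i p_j^{(i)})=2W(\ell_i)$ (since $p_j^{(i)}\nmid\ell_i$), so the error product $L_{\mathbf R}$ can be controlled uniformly by $L$, while the leading term remains $\frac{(q-1)^s}{q}$.

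Next I would substitute these estimates into the sieving inequality of Proposition~\ref{sieve}. The key bookkeeping step is that the weights in front of the various $N(\cdot)$ terms sum telescopically: the coefficient of the leading term $\frac{(q-1)^s}{q}$ across the $t_1+\dots+t_s$ ``prime-added'' terms minus $(t_1+\dots+t_s-1)$ copies of $N(\ell_1,\dots,\ell_s)$ collapses, by the definition $\delta=1-\sum_{i,j}\tfrac{1}{p_j^{(i)}}$, to a clean multiple of $\delta\cdot\frac{(q-1)^s}{q}$ times the common normalising factor $\frac{\varphi(\ell_1)\cdots\varphi(\ell_s)}{\ell_1d_1\cdots\ell_s d_s}$. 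This is the standard Cohen--Huczynska sieving identity: the main terms reassemble into $\delta$ times the main term, and the error terms are each bounded by $\frac{1}{\sqrt q}L$ (up to the same normalising factor), of which there are $t_1+\dots+t_s+(t_1+\dots+t_s-1)=2(t_1+\dots+t_s)-1$ in total after accounting for signs.

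Collecting these, $N\big(\tfrac{q-1}{d_1},\dots,\tfrac{q-1}{d_s}\big)$ is bounded below by $\frac{\varphi(\ell_1)\cdots\varphi(\ell_s)}{\ell_1 d_1\cdots\ell_s d_s}$ times
\[
\delta\,\frac{(q-1)^s}{q}-\bigl(2(t_1+\dots+t_s)-1\bigr)\frac{1}{\sqrt q}L,
\]
and since the normalising factor is strictly positive, positivity of $N$ reduces to the inequality $\delta\frac{(q-1)^s}{q}>\bigl(t_1+\dots+t_s-1+2\delta\bigr)\frac{1}{\sqrt q}L$. Dividing through by $\delta>0$ and multiplying by $\sqrt q$ yields exactly the stated condition
\[
\frac{(q-1)^s}{\sqrt q}>\left(\frac{t_1+\dots+t_s-1}{\delta}+2\right)L.
\]
Finally, since $N>0$ guarantees a solution of~\eqref{eqy} with each $y_i$ of order $\tfrac{q-1}{d_i}$, the equivalence noted before~\eqref{eqy} gives an $\Fq$-primitive point on the Fermat hypersurface~\eqref{eqx}. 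The main obstacle I expect is the careful coefficient accounting in the sieving step: one must verify that regrouping the leading terms really does produce the factor $\delta$ and must bound each error product $L_{\mathbf R}$ by $L$ uniformly, using $W(\ell_i p_j^{(i)})=2W(\ell_i)$ and the monotonicity of the bracketed factors in the number of prime factors; getting the constant in front of the error (the ``$+2$'' versus the count of error terms) exactly right is where sign-handling and the $\tfrac{1}{q}$ versus $\tfrac{1}{\sqrt q}$ discrepancies must be tracked with care.
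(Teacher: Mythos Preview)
The paper does not prove this statement; it defers to the standard Cohen--Huczynska argument in~\cite{cohen2018}. Your plan follows that template, and the main-term bookkeeping is correct: with $\theta=\prod_i\varphi(\ell_i)/(\ell_id_i)$ and $\theta(\ell;i,j)=(1-1/p_j^{(i)})\theta$, the leading coefficients in Proposition~\ref{sieve} do telescope to $\delta\theta\cdot(q-1)^s/q$. The error accounting, however, has a real gap. Your claim that each $L_{\mathbf R}$ ``can be controlled uniformly by $L$'' is false in the direction you need: replacing $W(\ell_i)$ by $2W(\ell_i)$ strictly \emph{enlarges} the $i$-th bracketed factor, so $L(\ell;i,j)>L$, and the monotonicity you invoke runs the wrong way. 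Even granting your count of $2T-1$ error contributions each of size $\theta L/\sqrt q$ (which does not hold), the resulting coefficient $2T-1$ exceeds the $T-1+2\delta$ you write in the next line whenever $T>2\delta$, so that passage is unjustified rather than merely delicate.

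What is missing is the standard differencing step. One rewrites the sieve as $\delta\,N(\ell)+\sum_{i,j}\bigl[N(\ell;i,j)-(1-1/p_j^{(i)})N(\ell)\bigr]$ and bounds each bracket \emph{directly} from the character-sum expansion: because the two terms share the prefactor $\theta(\ell;i,j)=(1-1/p_j^{(i)})\theta$, all characters with $r_i\mid\ell_id_i$ cancel, and only those with $p_j^{(i)}\mid r_i$ survive, whose total weight involves $d_iW(\ell_i)$ rather than $2d_iW(\ell_i)$ and still carries the factor $(1-1/p_j^{(i)})$. Summing $(1-1/p_j^{(i)})$ over all $(i,j)$ gives $T-1+\delta$; the extra $\delta$ comes from applying the lower bound to $\delta N(\ell)\ge\delta\theta\bigl((q-1)^s/q-L/\sqrt q\bigr)$, producing the total $T-1+2\delta$ and hence the criterion $(T-1)/\delta+2$. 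This cancellation in the differences---not a uniform bound on the $L_{\mathbf R}$---is the mechanism that makes the constant come out as stated.
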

The proof of the results above has become standard and will therefore be omitted. For readers interested in the details, we refer to the proof of Theorem 3.4 in~\cite{cohen2018}.

\subsubsection{Proof of Theorem~\ref{conj}}
Consider the sphere
\[
f \, :\quad x^{2}+y^{2}+z^{2}=1
\]
over a finite field $\mathbb{F}_q$ with  characteristic different from 2. Let $N$ denote that number of $\Fq$-rational points on such sphere. By Theorem \ref{fermat-cota},  
\[
N\geq \tfrac{\varphi(q-1)^3}{q}-\left(\tfrac{\varphi(q-1)}{q-1}\right)^3\tfrac{1}{\sqrt{q}}\left[1+\bigg(2 W\big(\tfrac{q-1}{2}\big)-1\bigg)q^{\frac{1}{2}}\right]^3.
\]

By using Lemma~\ref{boundW} to bound $W\big(\tfrac{q-1}{2}\big)$, the above inequality yields that a sufficient condition for $N>0$ is $q>5.275\times 10^9$. By choosing $\ell_1=\ell_2=\ell_3=:\ell$ and the same primes $p_1,\dots,p_t$ for each $\ell_i$ in Proposition~~\ref{sieve} and Theorem~\ref{sieveconsequence}, we have that $N>0$ provided
\begin{equation}\label{eqfinal}
\frac{(q-1)^3}{\sqrt{q}}>\left(\frac{3t-1}{\delta}+2\right)\left[1+\big(2 W(\ell)-1\big)q^{\frac{1}{2}}\right]^3.
\end{equation}
where $\delta=1-\sum_{j=1}^{t}\tfrac{3}{p_j}$. We observe that if $\tfrac{q-1}{2}$ is divisible by 10 or more distinct primes, then $q>12\times 10^9$, which implies that the case $W\big(\tfrac{q-1}{2}\big)\ge 2^{10}$ is settled. We may choose as sieving primes the largest number of prime divisors of $\tfrac{q-1}{2}$ for which $\delta>0$. We accomplish this by choosing the largest prime divisors of $\tfrac{q-1}{2}$. In the worst-case scenario, the prime factors of $\tfrac{q-1}{2}$ will consist of the smallest prime numbers. In the following table, we present the chosen sieving primes and the interval within which Inequality~\eqref{eqfinal} is satisfied.


\begin{center}
\begin{tabular}{|c|c|c|c|c|}
\hline
Maximum $W\big(\tfrac{q-1}{2}\big)$ & $p_1,\ldots,p_t$ & $\delta$ & $W(\ell)$ &  Interval of vality \\ \hline
$2^9$ & $13,17,19,23$ & $0.304\ldots$ & $2^5$ &  $9.536\times 10^6<q\le5.275\times 10^9$\\ \hline
$2^7$ & $11,13,17$ & $0.320\ldots$ & $2^4$ &  $804377<q\le 9.536\times 10^6$\\ \hline
$2^6$ & 11,13 & $0.298\ldots$ & $2^3$ & $300067<q\le 804377$ \\ \hline
\end{tabular}
\end{center}
For the first two rows of the table, we utilized the inequalities $29\#>6\times 10^9$ and $19\#>9.6\times 10^6$ to select the worst possible sieving primes based on these primorials. In the third row, the largest value of $W(\tfrac{q-1}{2})$ was computed using a routine implemented in the SageMath mathematics software system (see code below).
\begin{lstlisting}
max = 804377 # Set the maximum limit for calculations
P = Primes() 
max_prime_factors=1
for p in P:

    # Stop if the prime exceeds the limit
    if p >= max:
        break  
    n = 1
    
    # Does not compute the case p=2
    if p==2:
        p=3
        
    while (q := p**n) < max:
        derived_value = (q - 1) // 2  
        
        # Count prime factors of the derived value
        num_prime_factors = len(prime_factors(derived_value))
            
        # Update maximum prime factors and add number to intervals
        if num_prime_factors > max_prime_factors:
            max_prime_factors += 1
            
        n += 1  # Increment exponent for prime power generation
print(f"Maximum number of prime factors:",max_prime_factors)
\end{lstlisting}
For prime powers $q \leq 18602$, we explicitly verified the existence of $\mathbb{F}_q$-primitive points on the sphere using a routine implemented in SageMath (see code below), excluding the cases $q\neq 3,5,9,13,25.$ Below is an optimized parallelized Python implementation that efficiently checks for primitive points in finite fields.

\begin{lstlisting}
import multiprocessing
from sympy import primerange
from sage.all import GF

max_val = 300067
manager = multiprocessing.Manager()
failed_q_values = manager.list()  # Using a shared list instead of set()

def primitive_point_exists(S):
    """Check if there exists a primitive point on the sphere"""
    return any(x**2 + y**2 + z**2 == 1 for x in S for y in S for z in S)

def process_prime(p):
    """Compute primitive points for a given prime power"""
    n = 1
    while (q := p**n) < max_val:
        F = GF(q)
        S = {a for a in F if a != 0 and a.multiplicative_order() == q-1}

        if not primitive_point_exists(S):
            failed_q_values.append(q)  

        n += 1  # Increment exponent for prime power generation

if __name__ == "__main__":

    primes = list(primerange(2, max_val))
    num_processes = min(multiprocessing.cpu_count(), len(primes))

    with multiprocessing.Pool(processes=num_processes) as pool:
        pool.map(process_prime, primes)

    print("Values of q for which the conjecture fails:", list(failed_q_values))  # Convert list to normal Python list
\end{lstlisting}

This completes the proof of the conjecture.\hfill\qed


\section{Fermat prime characteristic cases}

In this section, we will calculate the number of $\mathbb{F}_q$-primitive points on the hyperplane
		\begin{equation}\label{linear}
		a_1x_1+a_2x_2+\cdots+a_sx_s=b
		\end{equation}
		where $a_1,\ldots,a_s, \, b  \in \mathbb{F}_q$, $s\geq 2$, and $q>3$ is a Fermat prime, that is, $q = 2^{2^l} + 1$ for some positive integer $l>0$. For $f(x_1,\ldots,x_n)=a_1x_1+a_2x_2+\cdots+a_sx_s-b$, Theorem \ref{pf} reads
		
		\[
		P_q(f(\mathbf{x}))= 
		\displaystyle\sum_{\substack{
				r_1,\ldots,r_s \\
				r_i \in\{1,2\}}}
		\frac{\mu(r_1)\cdots\mu(r_s)}{r_1\cdots r_s} N^*(f(x_1^{r_1},\ldots,x_s^{r_s}),
		\]
    since the only squarefree divisors of  $q-1 = 2^{2^l}$ are 1 and 2. Furthermore,

			\[
			N^*(f(\mathbf{x}^\mathbf r))= 
			\displaystyle\sum_{I\subset\{1,\ldots,s\}} (-1)^{s-|I|}
			N_I(f(\mathbf{x}^\mathbf r))),
			\]
	  by Lemma \ref{N*=NI}. Thus, we obtain  
	  
\begin{eqnarray}
	\label{eq_pri_fermat}
P_q(f(\textbf{x}))= 
\displaystyle\sum_{\substack{
		r_1,\ldots,r_s \\
		r_i \in\{1,2\}}}
\dfrac{\mu(r_1)\cdots\mu(r_s)}{r_1\cdots r_s}
\displaystyle\sum_{I\subset\{1,\ldots,s\}} (-1)^{s-|I|}
N_I(\mathbf{x}^{\mathbf{r}})).
\end{eqnarray}

In light of Equation \ref{eq_pri_fermat}, in order to determine $P_q(f(\textbf{x}))$, we still need to calculate \newline $N_I(f(\mathbf{x}^{\mathbf{r}})$ for every $I \subset \{1, \ldots, s\}$.	To accomplish this, we will employ two well-known results from the literature, as listed below:

\begin{lemma}[Theorem 6.26, \cite{lidl1997finite}]
\label{qfeven}
Let $f$ be a nondegenerate quadratic form over $\mathbb{F}_q$, $q$ odd, in an even number $n$ of indeterminates. Then for $b \in \mathbb{F}_q $ the number of solutions of the equation $f(x_1,\ldots,x_s) = b$  in $\mathbb{F}_q^s$ is
\[
q^{s-1}+q^{\frac{s-1}{2}}\nu(b)\chi_2\left( (-1)^{\frac{s}{2}}\Delta \right),
\]
where $\chi_2$ is the quadratic character of $\mathbb{F}_q^*$, $\Delta=det(f)$, $\nu(0)=q-1$ and $\nu(b)=-1$ for $b\in\Fq^*$.
\end{lemma}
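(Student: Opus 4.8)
The statement is the classical point count for a quadratic hypersurface, and I would prove it by the additive-character/Gauss-sum method. The plan is first to reduce to the diagonal case: since $q$ is odd, every nondegenerate quadratic form is equivalent under an invertible linear substitution to a diagonal form $a_1x_1^2+\cdots+a_sx_s^2$ with all $a_i\in\Fq^*$. Such a substitution is a bijection of $\Fq^s$, so it preserves the number of solutions of $f=b$, and it alters $\det(f)$ only by the square of the determinant of the substitution matrix; hence $\chi_2(\Delta)$ is unchanged and we may assume $f$ is diagonal with $\Delta=a_1\cdots a_s$.

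Next I would detect the equation $f(\mathbf x)=b$ by the canonical additive character $\psi$, using the orthogonality relation coming from Lemma~\ref{item123}: for $c\in\Fq$ one has $q^{-1}\sum_{y\in\Fq}\psi(yc)=1$ if $c=0$ and $0$ otherwise. This gives
\[
N(b)=\frac1q\sum_{y\in\Fq}\sum_{\mathbf x\in\Fq^s}\psi\big(y(f(\mathbf x)-b)\big).
\]
The term $y=0$ contributes the expected main term $q^{s-1}$. For $y\neq 0$ the inner sum factors over the variables, and each factor is a one-dimensional quadratic Gauss sum: writing $G=\sum_{x\in\Fq}\chi_2(x)\psi(x)$, and replacing the number of square roots of $u$ by $1+\chi_2(u)$ (the case $d=2$ of Lemma~\ref{dpower}) together with Lemma~\ref{item123}, one obtains $\sum_{x\in\Fq}\psi(ya_ix^2)=\chi_2(ya_i)\,G$. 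Multiplying over $i$ yields $\chi_2(y^s\Delta)\,G^s$.

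Here the hypothesis that the number of variables is \emph{even} enters decisively. Since $s$ is even and $y\neq 0$, $\chi_2(y^s)=1$, and $G^s=(G^2)^{s/2}=(\chi_2(-1)q)^{s/2}=\chi_2\big((-1)^{s/2}\big)q^{s/2}$, where $G^2=\chi_2(-1)q$ is the standard evaluation of the quadratic Gauss sum (the sign $\chi_2(-1)$ being governed by Lemma~\ref{quachar}). Thus the inner sum equals $\chi_2\big((-1)^{s/2}\Delta\big)q^{s/2}$ for every $y\neq 0$, independently of $y$. It then remains to evaluate $\sum_{y\neq 0}\psi(-yb)$, which by Lemma~\ref{item123} equals $q-1=\nu(0)$ when $b=0$ and $-1=\nu(b)$ when $b\neq 0$. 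Assembling,
\[
N(b)=q^{s-1}+q^{-1}\,\nu(b)\,\chi_2\big((-1)^{s/2}\Delta\big)q^{s/2}=q^{s-1}+\nu(b)\,\chi_2\big((-1)^{s/2}\Delta\big)q^{s/2-1},
\]
which is the asserted formula, the deviation term carrying the factor $q^{s/2-1}$.

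The only genuinely nontrivial inputs are the two structural facts I would want in hand: the diagonalization of a nondegenerate form over $\Fq$ together with the square-class invariance of its discriminant, and the closed evaluation $G^2=\chi_2(-1)q$ of the quadratic Gauss sum. Both are classical, so the main obstacle is bookkeeping rather than conceptual, namely tracking the parity of $s$ throughout (it is exactly what forces $\chi_2(y^s)=1$ and makes $G^s$ a pure power of $q$) and matching the discriminant convention $\Delta=\det(f)$ used in the statement. Since the lemma is quoted as Theorem 6.26 of \cite{lidl1997finite}, one could alternatively cite it verbatim, but the Gauss-sum derivation above is the natural self-contained route.
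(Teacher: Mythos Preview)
Your Gauss-sum derivation is correct and is essentially the standard proof (indeed, it is the argument Lidl--Niederreiter themselves give for Theorem~6.26). The paper does not supply its own proof of this lemma: it is quoted verbatim from \cite{lidl1997finite} as background and invoked without justification, so there is no in-paper argument to compare against.

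One remark worth recording: your computation correctly produces the error term with exponent $q^{s/2-1}=q^{(s-2)/2}$, whereas the exponent $\tfrac{s-1}{2}$ printed in the lemma statement is a typo in the paper. That this is a typo is confirmed by the paper's own use of the lemma in Theorem~\ref{quadratic}, where the exponent appearing is $\tfrac{|I|-2}{2}$, matching your formula and the source.
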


\begin{lemma}[Theorem 6.27, \cite{lidl1997finite}]
\label{qfodd}
Let $f$ be a nondegenerate quadratic form over $\mathbb{F}_q$, $q$ odd, in an odd number $s$ of indeterminates. Then for $b \in \mathbb{F}_q $ the number of solutions of equation $f(x_1,\ldots,x_s) = b$  in $\mathbb{F}_q^n$ is
\[
q^{s-1}+q^{\frac{s-1}{2}}\chi_2\left( (-1)^{\frac{s-1}{2}}b\Delta \right),
\]
where $\chi_2$ is the quadratic character of $\mathbb{F}_q$ and $\Delta=det(f)$.
\end{lemma}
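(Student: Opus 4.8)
\textit{Proof proposal.} The plan is to reduce to a diagonal form and then evaluate the resulting exponential sum via the quadratic Gauss sum. First I would invoke the classical fact that over $\Fq$ with $q$ odd every nondegenerate quadratic form is equivalent, under an invertible linear change of variables, to a diagonal form $a_1x_1^2+\cdots+a_sx_s^2$ with all $a_i\in\Fq^*$. Since such a change of variables is a bijection of $\Fq^s$, it preserves the number of solutions of $f=b$; moreover it multiplies $\det(f)$ by the square of the determinant of the transformation, so that the square class of $\Delta$ — and hence $\chi_2(\Delta)$ — is unchanged. Thus it suffices to prove the formula for $f=\sum_i a_ix_i^2$ with $\Delta$ represented by $a_1\cdots a_s$.

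Next I would expand the solution count using orthogonality of additive characters. Writing $N(b)$ for the number of solutions and letting $\psi$ be the canonical additive character, one has
\[
N(b)=\frac1q\sum_{t\in\Fq}\sum_{\mathbf x\in\Fq^s}\psi\big(t(a_1x_1^2+\cdots+a_sx_s^2-b)\big),
\]
where the inner sum over $t$ detects $f(\mathbf x)=b$ by Lemma~\ref{item123}. The term $t=0$ contributes $q^{s-1}$, and for each fixed $t\neq0$ the sum over $\mathbf x$ factors as $\prod_{i=1}^s\sum_{x_i\in\Fq}\psi(ta_ix_i^2)$, reducing everything to the one-variable sum $\sum_{x\in\Fq}\psi(cx^2)$ for $c\in\Fq^*$. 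I would evaluate this by counting preimages of the squaring map: the number of $x$ with $x^2=y$ equals $1+\chi_2(y)$, whence $\sum_x\psi(cx^2)=\sum_y\chi_2(y)\psi(cy)=\chi_2(c)g(\chi_2)$, using Lemma~\ref{item123} and the substitution $y\mapsto y/c$, where $g(\chi_2)=\sum_y\chi_2(y)\psi(y)$. Multiplying over $i$ and using that $s$ is odd (so $\chi_2(t)^s=\chi_2(t)$) gives
\[
N(b)=q^{s-1}+\frac{g(\chi_2)^s\chi_2(\Delta)}{q}\sum_{t\neq0}\chi_2(t)\psi(-tb).
\]

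The crux, and the step I expect to require the most care, is the bookkeeping of the Gauss-sum factors against the parity of $s$. Here I would use the standard evaluation $g(\chi_2)^2=\chi_2(-1)q$ (from $g(\chi)g(\bar\chi)=\chi(-1)q$ with $\chi_2$ real, and $\chi_2(-1)=(-1)^{(q-1)/2}$ by Lemma~\ref{quachar}). Since $s-1$ is even, $g(\chi_2)^{s-1}=\chi_2\big((-1)^{(s-1)/2}\big)q^{(s-1)/2}$, leaving a single factor $g(\chi_2)$. For the remaining $t$-sum, when $b=0$ it vanishes because $\sum_{t\neq0}\chi_2(t)=0$, matching $\chi_2(0)=0$ in the claimed formula; when $b\neq0$ the substitution $t\mapsto -b^{-1}t$ gives $\sum_{t\neq0}\chi_2(t)\psi(-tb)=\chi_2(-1)\chi_2(b)g(\chi_2)$. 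Combining these, the correction term becomes
\[
\frac1q\,\chi_2\big((-1)^{(s-1)/2}\big)q^{(s-1)/2}\,\chi_2(\Delta)\,\chi_2(-1)\chi_2(b)\,g(\chi_2)^2,
\]
and substituting $g(\chi_2)^2=\chi_2(-1)q$ collapses the two factors of $\chi_2(-1)$ and one power of $q$, yielding exactly $q^{(s-1)/2}\chi_2\big((-1)^{(s-1)/2}b\Delta\big)$, as required. The main obstacles are therefore the clean diagonalization together with the square-class invariance of $\chi_2(\Delta)$, and the parity-sensitive Gauss-sum bookkeeping that distinguishes this odd-$s$ case from the even-$s$ statement of Lemma~\ref{qfeven}.
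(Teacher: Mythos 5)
Your proof is correct. Note that the paper itself gives no proof of this lemma --- it is quoted verbatim as Theorem 6.27 of Lidl--Niederreiter \cite{lidl1997finite} --- and your argument (diagonalization, which preserves the square class of $\Delta$ and hence $\chi_2(\Delta)$; orthogonality of additive characters; the evaluation $\sum_{x}\psi(cx^2)=\chi_2(c)g(\chi_2)$ via the $1+\chi_2(y)$ preimage count; and the parity bookkeeping through $g(\chi_2)^2=\chi_2(-1)q$, with the two $\chi_2(-1)$ factors cancelling and the $b=0$ case handled by the vanishing of $\sum_{t\neq 0}\chi_2(t)$ consistently with the convention $\chi_2(0)=0$) is precisely the standard Gauss-sum derivation underlying the cited result, executed without gaps.
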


\begin{theorem}
\label{ri=1}
Let $f(x_1,\ldots,x_s)=a_1x_1+a_2x_2+\cdots+a_sx_s-b$ a linear form over $\mathbb{F}_q$, where $q$ is a Fermat prime. If there exists $i \in I$ such that $r_i=1$, then 
\[
N_I(f(x_1^{r_1},\ldots,x_s^{r_s}))=q^{|I|-1},
\]
for $I \subset \{1, \ldots, s\}$ and $r_i\in \{1, 2 \}$.
\end{theorem}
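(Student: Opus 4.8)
The plan is to exploit the fact that the substitution $x_j\mapsto x_j^{r_j}$ with $r_j=1$ leaves the $j$-th variable \emph{linear}, so that the defining equation becomes uniquely solvable for $x_j$ once all the other variables are fixed. First I would unwind the definition of $N_I$: by construction $N_I(f(\mathbf{x}^{\mathbf r}))$ counts the $\mathbb{F}_q$-points of $\mathcal H$ with $x_i=0$ for every $i\in I^{\mathsf{c}}$. Setting those coordinates to zero annihilates the monomials $a_i x_i^{r_i}$ for $i\in I^{\mathsf{c}}$, so $N_I(f(\mathbf{x}^{\mathbf r}))$ is exactly the number of tuples $(x_i)_{i\in I}\in\mathbb{F}_q^{|I|}$ satisfying
\[
\sum_{i\in I} a_i x_i^{r_i}=b.
\]

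Next I would fix an index $j\in I$ with $r_j=1$, which exists by hypothesis, and use that $a_j\neq 0$ (the coefficients of the linear form lie in $\mathbb{F}_q^*$). Isolating $x_j$ gives
\[
x_j=a_j^{-1}\Bigl(b-\sum_{i\in I\setminus\{j\}} a_i x_i^{r_i}\Bigr),
\]
whose right-hand side is a well-defined element of $\mathbb{F}_q$ for every assignment of the remaining $|I|-1$ variables $(x_i)_{i\in I\setminus\{j\}}\in\mathbb{F}_q^{|I|-1}$. This exhibits a bijection between $\mathbb{F}_q^{|I|-1}$ and the solution set: each choice of the other variables determines $x_j$ uniquely, and conversely every solution arises in this way from the values of its coordinates indexed by $I\setminus\{j\}$. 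Counting the free choices then yields $N_I(f(\mathbf{x}^{\mathbf r}))=q^{|I|-1}$, as claimed.

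I expect no genuine obstacle here, since the argument is the standard observation that a linear equation with a nonzero coefficient in one variable has exactly one solution for that variable. The only point meriting care is the nondegeneracy hypothesis $a_j\neq 0$ on the coefficient of the linear variable: were $a_j=0$, the monomial $a_jx_j$ would vanish, $x_j$ would range freely, and the count would instead reduce to $q$ times the number of solutions of $\sum_{i\in I\setminus\{j\}}a_ix_i^{r_i}=b$. It is worth emphasizing that neither the Fermat-prime assumption nor the specific values $r_i\in\{1,2\}$ of the remaining exponents play any role in this argument; they merely fix the ambient setting in which the statement is later combined with Equation~\eqref{eq_pri_fermat} to compute $P_q(f(\mathbf x))$.
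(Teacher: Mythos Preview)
Your proposal is correct and follows essentially the same approach as the paper: fix an index $j\in I$ with $r_j=1$, observe that $a_j\neq 0$ makes $x_j$ uniquely determined by the remaining $|I|-1$ variables, and count. The paper's proof is a two-line version of your argument; your write-up is more careful (in particular, explicitly unwinding $N_I$ to the equation $\sum_{i\in I}a_ix_i^{r_i}=b$ and flagging the implicit hypothesis $a_j\in\mathbb{F}_q^*$), but the underlying idea is identical.
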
		
\begin{proof}
Assume, without loss of generality, that $1 \in I$ and $r_1=1$. Now observe that we can assign arbitrary elements of $\mathbb{F}_q$ to $x_2,\ldots,x_s$ and the value of $x_1$ is uniquely determined. This result follows directly from this observation.
\end{proof}

\begin{theorem}\label{quadratic}
	Let $f(x_1,\ldots,x_s)=a_1x_1+a_2x_2+\cdots+a_sx_s-b$ a linear form over $\mathbb{F}_q$, where $q$ is a Fermat prime. If all $i \in I$ we have  $r_i=2$, then 
	
	\begin{enumerate}
		\item 
		\[
		N_I(f(x_1^{r_1},\ldots,x_s^{r_s}))=q^{|I|-1}+q^{\frac{|I|-2}{2}}\nu(b)\chi_2\left(\prod_{i \in I}a_i\right),
		\] if $|I|$	 is even. Or
		
		\item 
		\[
		N_I(f(x_1^{r_1},\ldots,x_s^{r_s}))=q^{|I|-1}+q^{\frac{|I|-1}{2}}\chi_2\left(b\prod_{i \in I}a_i \right),
		\] if $|I|$ is odd.
	\end{enumerate}
\end{theorem}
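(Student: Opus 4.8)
The plan is to reduce the count $N_I$ to the number of solutions of a diagonal quadratic equation and then invoke Lemmas~\ref{qfeven} and~\ref{qfodd}. By definition, $N_I(f(\mathbf{x}^{\mathbf{r}}))$ counts the points of $f(\mathbf{x}^{\mathbf{r}})=0$ with $x_j=0$ for every $j\in I^{\mathsf c}$. Substituting these zeros and using that $r_i=2$ for each $i\in I$, the defining equation collapses to $\sum_{i\in I}a_ix_i^2=b$, where the variables $x_i$ with $i\in I$ range freely over $\mathbb{F}_q$. Hence $N_I(f(\mathbf{x}^{\mathbf{r}}))$ is exactly the number of solutions in $\mathbb{F}_q^{|I|}$ of the value $b$ under the diagonal quadratic form $Q(\mathbf{x})=\sum_{i\in I}a_ix_i^2$.

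Next I would record that $Q$ is nondegenerate: since $q$ is odd and each $a_i\neq 0$, the associated symmetric matrix is the invertible diagonal matrix $\mathrm{diag}\big((a_i)_{i\in I}\big)$, whose determinant is $\Delta=\prod_{i\in I}a_i$. With this identification of $\Delta$ in hand, I would apply Lemma~\ref{qfeven} when $|I|$ is even and Lemma~\ref{qfodd} when $|I|$ is odd. In each case this yields the claimed power of $q$ together with a non-principal term whose quadratic-character argument is $(-1)^{|I|/2}\Delta$ (even case, weighted by $\nu(b)$) or $(-1)^{(|I|-1)/2}b\Delta$ (odd case).

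The final, decisive step is to remove the sign factor inside the quadratic character, and this is precisely where the Fermat prime hypothesis enters. Writing $q=2^{2^l}+1$ with $l>0$, we have $q\equiv 1\pmod 4$, so Lemma~\ref{quachar} gives $\chi_2(-1)=(-1)^{(q-1)/2}=1$. Consequently $-1$ is a square in $\mathbb{F}_q^*$, and therefore $\chi_2\big((-1)^k c\big)=\chi_2(c)$ for every integer $k$ and every $c\in\Fq$. Applying this with $k=|I|/2$ in the even case and $k=(|I|-1)/2$ in the odd case turns $\chi_2\big((-1)^{|I|/2}\Delta\big)$ into $\chi_2(\Delta)=\chi_2\big(\prod_{i\in I}a_i\big)$ and $\chi_2\big((-1)^{(|I|-1)/2}b\Delta\big)$ into $\chi_2\big(b\prod_{i\in I}a_i\big)$, which are exactly the two claimed formulas.

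I expect the main obstacle to be mostly bookkeeping: correctly matching the number of variables to the parity case of Lidl--Niederreiter and pinning down that $\det Q=\prod_{i\in I}a_i$ under the right nondegeneracy hypothesis. The genuinely load-bearing point is conceptual rather than computational, namely that the Fermat prime condition forces $q\equiv 1\pmod 4$ and hence $\chi_2(-1)=1$; without it the parity-dependent sign could not be absorbed into $\chi_2$, and the clean product formulas would acquire an extra unavoidable factor.
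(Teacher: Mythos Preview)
Your proof is correct and follows the same route as the paper: reduce $N_I$ to the solution count of the diagonal quadratic form $\sum_{i\in I}a_ix_i^2=b$, apply Lemmas~\ref{qfeven} and~\ref{qfodd}, and then use $\chi_2(-1)=1$ (from $q\equiv 1\pmod 4$ for a Fermat prime with $l>0$) to strip the sign from the character argument. The paper's own proof is the same but stated much more tersely, so your version simply makes explicit the bookkeeping that the authors suppress.
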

		
\begin{proof}
It follows from Lemma~\ref{quachar} that $\chi_2(-1)=(-1)^{2^{2^l-1}}=1$, since $l>0$. Then the result follows directly from Lemmas \ref{qfeven} and \ref{qfodd}.
\end{proof}

	

\subsection{Proof of Theorem~\ref{numexato}}
    By replacing the values of $N_I(\mathbf x^\mathbf r)$ given by Theorems~\ref{ri=1} and~\ref{quadratic} into Equation~\ref{eq_pri_fermat} and applying Lemma~\ref{mobiusEuler}, we obtain that
    $$P_q(f(\textbf{x}))= \frac{\varphi(q-1)^s}{q}+
\displaystyle\sum_{\substack{
		I\subset\{1,\ldots,s\}\\
		|I|\text{ is even}}} \big(\tfrac{-1}{2}\big)^{s}q^{\frac{|I|-2}{2}}\nu(b)
\prod_{j\in I}\chi_2(a_j)+
\displaystyle\sum_{\substack{
		I\subset\{1,\ldots,s\}\\
		|I|\text{ is odd}}} \big(\tfrac{-1}{2}\big)^{s}q^{\frac{|I|-1}{2}}\chi_2(b)
\prod_{j\in I}\chi_2(a_j).$$
Then the result follows from the equalities
$$\displaystyle\sum_{\substack{
		I\subset\{1,\ldots,s\}\\
		|I|\text{ is even}}}q^{\frac{|I|}{2}}
\prod_{j\in I}\chi_2(a_j)=\frac{(-1)^s \prod_{i=1}^s\big(\sqrt{q}\chi_2(a_i)+1\big)+\prod_{i=1}^s\big(\sqrt{q}\chi_2(a_i)-1\big)}{2}$$
and
$$\displaystyle\sum_{\substack{
		I\subset\{1,\ldots,s\}\\
		|I|\text{ is odd}}}q^{\frac{|I|}{2}}
\prod_{j\in I}\chi_2(a_j)=\frac{(-1)^s\prod_{i=1}^s\big(\sqrt{q}\chi_2(a_i)+1\big)-\prod_{i=1}^s\big(\sqrt{q}\chi_2(a_i)-1\big)}{2}.$$
\hfill\qed


\end{document}